\definecolor{parchment}{RGB}{214, 204, 169}
\newtheoremstyle{zoltanstyle}
  {1em} 
  {\topsep} 
  {} 
  {} 
  {\bfseries} 
  {.} 
  {.5em} 
  {} 
\theoremstyle{zoltanstyle}
\xpatchcmd\swappedhead{~}{.~}{}{}
\newtheorem{body}{}
\numberwithin{body}{section}
\newtheorem{corollary}[body]{Corollary}
\newtheorem{definition}[body]{Definition}
\newtheorem{example}[body]{Example}
\newtheorem{lemma}[body]{Lemma}
\newtheorem{problem}[body]{Problem}
\newtheorem{proposition}[body]{Proposition}
\newtheorem{theorem}[body]{Theorem}
\let\expandafter\oldproof\csname\string\proof\endcsname
\let\oldendproof\endproof
\renewenvironment{proof}[1][\proofname]{%
  \oldproof[\normalfont \bfseries #1.]%
}{\oldendproof}
\tikzset{snake it/.style={decorate, decoration=snake}}
\definecolor{gold}{RGB}{255,215,0}
\definecolor{softBlack}{RGB}{45, 47, 49}
\definecolor{creamWhite}{RGB}{245,244,241}
\definecolor{softGray}{RGB}{220, 216, 214}
\definecolor{brick}{RGB}{232, 48, 48}
\newcommand{\typesetoperator}[1]{\mathbf{#1}}
\DeclareMathOperator{\ds}{\typesetoperator{ds}}
\DeclareMathOperator{\exclmid}{\typesetoperator{EM}}
\DeclareMathOperator{\doubleneg}{\typesetoperator{DN}}
\newcommand{\SetComp}[2]{\left\{ {#1}\:\middle|\:{#2} \right\}}   
\newcommand{\pita}{\mathrel{\reflectbox{\rotatebox[origin=c]{180}{$\mathbb{A}$}}\!\!}}  
\definecolor{gold}{RGB}{255,215,0}
\definecolor{softBlack}{RGB}{45, 47, 49}
\definecolor{creamWhite}{RGB}{245,244,241}
\definecolor{softGray}{RGB}{220, 216, 214}
\definecolor{brick}{RGB}{232, 48, 48}
\title{Degree of Satisfiability in Heyting Algebras}
\author{Benjamin Merlin Bumpus \thanks{University of Florida, Gainesville FL32611, United States of America.}   ~ \&  Zoltan A. Kocsis \thanks{University of New South Wales, Kensington NSW 2052, Australia.}}
\date{\today}
\begin{document}

\maketitle

\begin{abstract}
Given some finite structure $M$ and property $p$, it is natural to study the degree of satisfiability of $p$ in $M$; i.e. to ask: what is the probability that uniformly randomly chosen elements in $M$ satisfy $p$? In group theory, a well-known result of Gustafson states that the equation $xy=yx$ has a finite satisfiability gap: its degree of satisfiability is either $1$ (in Abelian groups) or no larger than $\frac{5}{8}$. Degree of satisfiability has proven useful in the study of (finite and infinite) group-like and ring-like algebraic structures, but finite satisfiability gap questions have not been considered in lattice-like, order-theoretic settings yet.

Here we investigate degree of satisfiability questions in the context of Heyting algebras and intuitionistic logic. We classify all equations in one free variable with respect to finite satisfiability gap, and determine which common principles of classical logic in multiple free variables have finite satisfiability gap. In particular we prove that, in a finite non-Boolean Heyting algebra, the probability that a randomly chosen element satisfies $x \vee \neg x = \top$ is no larger than $\frac{2}{3}$. Finally, we generalize our results to infinite Heyting algebras, and present their applications to point-set topology, black-box algebras, and the philosophy of logic.
\end{abstract}

\section{Introduction}

\begin{definition}\label{def:ds-itself}
Take a first-order language $\mathcal{L}$, a finite $\mathcal{L}$-structure $M$, and an $\mathcal{L}$-formula $\varphi(x_1,\dots,x_n)$ in $n$ free variables. We call the quantity
$$ \frac{ \left   \lvert    \SetComp{(a_1, \dots, a_n) \in M^n}{\varphi(a_1, \dots,a_n)} \right   \lvert    }{    \lvert    M      \lvert     ^n }$$
the \textit{degree of satisfiability} of the formula $\varphi$ in the structure $M$, and denote it $\ds_M(\varphi)$.
\end{definition}

\begin{minipage}{\linewidth}
\begin{definition}\label{def:finite-gap}
Take a theory $T$ over a first-order language $\mathcal{L}$, along with an $\mathcal{L}$-formula $\varphi$. If we can find a constant $\varepsilon > 0$ such that for every finite model $M$ of the theory $T$, we have either

\begin{enumerate}
    \item $\ds_M(\varphi) = 1$; or else 
    \item $\ds_M(\varphi) \leq 1 - \varepsilon,$
\end{enumerate}
then we say that the formula $\varphi$ has \textit{finite satisfiability gap} $\varepsilon$ in $T$.
\end{definition}

\end{minipage}

\begin{body}\label{body:group-discussion}
A classic result of Gustafson~\cite{gustafson-degree-of-commutativity} states that in a finite non-Abelian group $G$, $\ds_G(xy=yx)$ cannot exceed $\frac{5}{8}$. In particular, ``deceptive'' groups, which only barely fail to be Abelian, do not exist. The seminal work of Antol\'{i}n, Martino and Ventura~\cite{antolin-infinite-dc}, generalizing Gustafson's result to a class of finitely generated groups, boosted collective interest in finite gap properties of other group-theoretic equations and formulae: in the last few years, tight finite gap bounds have been obtained for the nested simple commutator equation~\cite{lescot-nilpotency}, the 2-Engel and metabelian identities~\cite{delizia-satisfiability}, the equations $xy^2=y^2x$ and $xy^3=y^3x$~\cite{zoltanGroups}, and others. The existence of an equation in the language of group theory that does not have finite satisfiability gap remains open, even in the case of equations in only one free variable. An elementary argument shows that $x^2 = 1$ has finite satisfiability gap $\frac{1}{4}$, and Laffey~\cite{laffey-powers-of-three} established a finite gap of $\frac{2}{9}$ for $x^3 = 1$. For $x^p = 1$ with $p \geq 5$, only partial results are known.
\end{body}

\subsection{Degrees of Classicality}

\begin{body}
Recall that a Heyting algebra $\langle H, \wedge, \vee, \rightarrow, \bot, \top \rangle$ is a bounded distributive lattice where for every two elements $a, b \in H$, the set $\SetComp{c \in H}{a \wedge c \leq b}$ has a distinguished maximum element denoted $a \rightarrow b$. We introduce $\neg x$ as shorthand for $x \to \bot$. A Boolean algebra is a Heyting algebra $\langle H, \wedge, \vee, \bot, \top \rangle$ in which $x \to y = \neg x \lor y$ for all $x, y \in H$.
We will assume that $\bot \neq \top$ throughout this article, since the trivial Heyting algebra satisfies every equation.
\end{body}

\begin{body}
The main significance of Heyting algebras is that they provide the natural algebraic semantics for (the propositional fragment of) intuitionistic logic. We leverage this correspondence multiple times throughout this paper, by freely identifying propositional formulae of intuitionistic logic with first-order \textit{terms} in the language of Heyting algebras. Recall that a propositional formula $\varphi$ is provable in intuitionistic logic precisely if $H \models (\varphi = \top)$ for every Heyting algebra~$H$.
\end{body}

\begin{definition}
Consider the language $\langle \wedge, \vee, \rightarrow, \bot, \top \rangle$ of Heyting algebras. We call an equation $\varphi$ a \textit{classical principle} if a Heyting algebra $H$ satisfies $H \models \varphi$ precisely when $H$ is a Boolean algebra.
\end{definition}

\begin{body}
Notice that an equation $f = \top$ constitutes a classical principle precisely if adding $f$ as an axiom to the standard Hilbert calculus for intuitionistic propositional logic yields a Hilbert calculus whose tautologies are those of classical propositional logic.
\end{body}

\pagebreak

\begin{example}\label{example:classical-principles}
The following are well-known classical principles:
\begin{enumerate}
    \item $x \vee \neg x = \top$  (\textit{the law of excluded middle}),
    \item $\neg\neg x = x$ (\textit{double-negation elimination}),
    \item $(x \rightarrow y) \rightarrow x = x$ (\textit{Peirce's law}),
    \item $\neg y \rightarrow \neg x = x \rightarrow y$ (\textit{contrapositive principle}),
    \item $(\neg x \rightarrow y) \rightarrow (x \rightarrow y) \rightarrow y = \top$ (\textit{LEM - eliminator form}),
    \item $x \rightarrow y = \neg x \vee y$ (\textit{material implication}).
\end{enumerate}
\end{example}

\begin{body}
In what follows, we investigate the degree of satisfiability of formulae in Heyting algebras. We give a complete classification of all equations in one free variable with respect to finite satisfiability gap in Section~\ref{sec:1-var} (as explained in \ref{body:group-discussion}, an analogous classification for equations in groups remains elusive). In particular, we obtain that the law of excluded middle, $x \vee \neg x = \top$, either holds for all $x$, or for no more than $\frac{2}{3}$ of all $x$. While the poorly understood structure of free Heyting algebras makes a similar classification for two-variable equations unlikely, we provide a thorough treatment of the classical principles enumerated above, along with some results that can be used to establish (the lack of) finite gap in many two-variable cases (Section~\ref{sec:2-var}). Finally, we generalize the classification of one variable equations to infinite Heyting algebras (Section \ref{sec:infinite}), and present applications of our results in point-set topology (Corollary \ref{cor:topology}), in black-box algebras (Section \ref{sec:black-box}), and in the philosophy of logic (Section \ref{sec:discussion}).
\end{body}

\subsection{Technical Preliminaries}

\begin{definition}
The \emph{upset} of an element $x$ in a Heyting algebra $H$ (or any poset) is the set $\SetComp{y \in H}{y \geq x}$, which we denote by $\uparrow_H x$ (or simply $\uparrow x$ if $H$ is clear from context).
\end{definition}

\begin{definition}
Given a Heyting algebra $H$ with bottom element $\bot$ and top element $1$, the Heyting algebra $H \oplus \top$ is the Heyting algebra obtained from $H$ by adding a new element $\top$ to $H$ and the relation $\top \geq x$ for all $x \in H$. The operation of \emph{adjoining a $k$-chain} to $H$ is denoted $H \oplus_{k} \top$ and is defined recursively as $H \oplus_{k} \top := (H \oplus_{k -1} \top) \oplus \top$. 
\end{definition}

\begin{proposition}\label{prop:eqn-base-form}
Every finite system of equations in the language of Heyting algebras is equivalent to some equation of the form $\varphi = \top$.
\begin{proof}
Let $a,b$ denote arbitrary terms in the language of Heyting algebras. Use the fact that $a = b$ precisely if $a \rightarrow b = \top$ and $b \rightarrow a = \top$, along with the fact that $a = \top$ and $b = \top$ hold precisely if $a \wedge b = \top$ holds, to reduce the system to a single equation in the given form.
\end{proof}
\end{proposition}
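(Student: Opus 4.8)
The plan is to reduce an arbitrary finite system to a single equation by repeatedly applying two standard facts about Heyting algebras: first, that equality of two terms decomposes into a pair of inequalities; and second, that each inequality is internalized by the implication operation as an equation of the desired shape. I would isolate three elementary observations and then assemble them into an iterated (or inductive) reduction on the number of equations.

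First I would record the key equivalence that converts an inequality into the target form: for terms $a, b$, we have $a \leq b$ if and only if $a \rightarrow b = \top$. This follows directly from the defining property of $\rightarrow$, namely that $a \rightarrow b$ is the maximum element of $\{c : a \wedge c \leq b\}$. Indeed, $a \rightarrow b = \top$ holds precisely when $\top$ lies in this set (since $\top$ is the global maximum of $H$), i.e. precisely when $a \wedge \top = a \leq b$. Next I would note that $a = b$ holds if and only if both $a \leq b$ and $b \leq a$ hold, so a single equation $a = b$ is equivalent to the conjunction of the two equations $a \rightarrow b = \top$ and $b \rightarrow a = \top$.

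With these in hand, the reduction proceeds as follows. Given a finite system $\{a_i = b_i\}_{i=1}^{n}$, I would replace each $a_i = b_i$ by the pair $a_i \rightarrow b_i = \top$ and $b_i \rightarrow a_i = \top$, producing a finite list of equations each asserting that some term equals $\top$. Finally, using that $p = \top$ and $q = \top$ jointly hold if and only if $p \wedge q = \top$ (since $\top$ is the top element, $p \wedge q = \top$ forces $p = q = \top$, and the converse is immediate), I would meet all of these terms together into a single term $\varphi$, yielding the equivalent equation $\varphi = \top$.

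The content here is entirely formal, so I do not anticipate a genuine obstacle; the only point requiring a moment's care is the justification of $a \leq b \Longleftrightarrow a \rightarrow b = \top$ straight from the ``maximum element'' definition of $\rightarrow$, rather than from a more familiar axiomatization. Everything else is routine, and the construction visibly preserves the free variables of the original system, so the resulting equation is equivalent to the system over every Heyting algebra.
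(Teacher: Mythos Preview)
Your proposal is correct and follows essentially the same approach as the paper: reduce each equation $a=b$ to the pair $a\rightarrow b=\top$ and $b\rightarrow a=\top$, then conjoin all the resulting $\top$-equations into a single one using $\wedge$. The only difference is that you spell out the justification of $a\leq b \Leftrightarrow a\rightarrow b=\top$ from the definition of $\rightarrow$, which the paper leaves implicit.
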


\begin{body}
In accordance with Proposition~\ref{prop:eqn-base-form}, a complete classification of all equations in a given number of variables with respect to degree of satisfiability immediately induces a corresponding classification for all finite systems of equations as well.
\end{body}

\begin{proposition}\label{prop:ds-of-product-is-product-of-ds}
Consider an algebraic theory $T$ over a language $\mathcal{L}$. For any two models $H,J$ of $T$, $H \times J$ is also a model of $T$. Moreover, when $H$ and $J$ are finite, the equality $$\ds_{H \times J}{\varphi} =  \ds_{H}{\varphi} \times \ds_{J}{\varphi}$$ holds for any $\mathcal{L}$-equation $\varphi$.
\end{proposition}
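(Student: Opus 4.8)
The plan is to treat the two assertions separately. The first --- that $H \times J$ is again a model of $T$ --- is the standard fact that the class of models of an algebraic (equational) theory is closed under direct products; I would prove it directly by recalling that the operations on $H \times J$ are defined componentwise, so any equational axiom of $T$ holds in $H \times J$ as soon as it holds in each of $H$ and $J$. This already contains the key mechanism needed for the second, quantitative claim.

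For the product formula, the central step is a \emph{separation lemma} for term evaluation. I would show by induction on the structure of an arbitrary $\mathcal{L}$-term $t(x_1,\dots,x_n)$ that for all $(a_i,b_i)\in H\times J$,
\[
t^{H\times J}\big((a_1,b_1),\dots,(a_n,b_n)\big) = \big(t^{H}(a_1,\dots,a_n),\, t^{J}(b_1,\dots,b_n)\big).
\]
The base cases (variables and constant symbols) are immediate, and the inductive step is exactly the componentwise definition of the operations in the product. Consequently, an equation $\varphi$ of the form $s = t$ is satisfied by the tuple $\big((a_1,b_1),\dots,(a_n,b_n)\big)$ in $H\times J$ if and only if $s^{H}(\vec a)=t^{H}(\vec a)$ and $s^{J}(\vec b)=t^{J}(\vec b)$, i.e.\ if and only if $\vec a$ satisfies $\varphi$ in $H$ and $\vec b$ satisfies $\varphi$ in $J$.

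With the separation lemma in hand, the counting is routine. Let $S_{H}\subseteq H^{n}$, $S_{J}\subseteq J^{n}$ and $S_{H\times J}\subseteq (H\times J)^{n}$ denote the respective sets of satisfying tuples. The canonical bijection $(H\times J)^{n}\cong H^{n}\times J^{n}$ that regroups coordinates carries $S_{H\times J}$ exactly onto $S_{H}\times S_{J}$, by the previous paragraph; hence $|S_{H\times J}| = |S_{H}|\cdot|S_{J}|$. Dividing by $|H\times J|^{n} = |H|^{n}\,|J|^{n}$ gives
\[
\ds_{H\times J}(\varphi) = \frac{|S_{H\times J}|}{|H\times J|^{n}} = \frac{|S_{H}|}{|H|^{n}}\cdot\frac{|S_{J}|}{|J|^{n}} = \ds_{H}(\varphi)\cdot\ds_{J}(\varphi).
\]

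I do not expect a genuine obstacle here; the only point demanding care is that the argument relies essentially on $\varphi$ being an \emph{equation}. The separation lemma turns satisfaction in the product into the \emph{conjunction} of satisfaction in each factor, and it is precisely this conjunctive (product) behaviour that yields a product of cardinalities. For a general first-order formula --- one involving quantifiers, disjunction, or negation --- satisfaction in $H\times J$ need not decompose in this way, so multiplicativity would fail; I would flag this restriction explicitly rather than leave it implicit.
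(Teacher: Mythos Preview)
Your argument is correct. The paper actually states this proposition without proof, treating both claims as standard facts about algebraic theories; your write-up supplies precisely the routine details one would expect (componentwise term evaluation by induction on term structure, then a straightforward count). Your closing caveat that multiplicativity fails for general first-order formulae is also in keeping with the paper, which makes exactly this point in the paragraph immediately following the proposition via a join-irreducibility counterexample in Boolean algebras.
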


\begin{body}
Proposition~\ref{prop:ds-of-product-is-product-of-ds} does not generalize to arbitrary formulae in the language of Heyting algebras. Consider the join-irreducibility formula $\varphi$ defined as an abbreviation for $\forall y, z. (x = y \vee z) \rightarrow (x = y) \vee (x = z)$. In a Boolean algebra, only $\bot$ and atoms are join-irreducible, so $\varphi$ has degree of satisfiability $1$ in the 2-element Boolean algebra, degree of satisfiability $\frac{3}{4}$ in the 4-element Boolean algebra, but degree $\frac{1}{2} \neq 1 \cdot \frac{3}{4}$ in the 8-element Boolean algebra.
\end{body}

\subsection{Excluded-middle and double-negation-elimination}
\begin{definition} \label{def:basic-loci}
Take a Heyting algebra $H$. We let $\exclmid_H$ denote the center of $H$, i.e. the set $\SetComp{x \in H}{x \vee \neg x = \top}$ of elements that satisfy the law of excluded middle. Similarly, we let $\doubleneg_H$ denote the set $\SetComp{x \in H}{\neg\neg x = x}$ of elements that satisfy double-negation elimination.
\end{definition}

\begin{body}
Since the sets $\exclmid_H$ and $\doubleneg_H$ are of particular importance to us, we take a few moments to establish and/or recall some of their simple properties.
\end{body}

\begin{proposition}\label{prop:excluded_mid_largest_BA}
For any Heyting algebra $H$, the set $\exclmid_H$ is a sub-algebra of $H$; in fact it is a Boolean algebra and, for every other Boolean sub-algebra $B$ of $H$, we have $B \subseteq \exclmid_H$.
\end{proposition}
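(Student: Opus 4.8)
The plan is to characterize $\exclmid_H$ as the set of \emph{complemented} elements of $H$ and then exploit the uniqueness of complements in the distributive lattice $H$. First I would record the ambient identity $x \wedge \neg x = \bot$, which holds in every Heyting algebra since $\neg x = x \to \bot$ and the defining adjunction $a \wedge c \leq b \iff c \leq a \to b$ gives $x \wedge (x \to \bot) \leq \bot$. Consequently $x \in \exclmid_H$ precisely when $\neg x$ is a lattice-theoretic complement of $x$, i.e. when $x$ is complemented. The crucial auxiliary observation is that whenever $x \wedge y = \bot$ we get $y \leq \neg x$ (again by the adjunction); this pins $\neg x$ down as the unique candidate complement and drives both the closure and the maximality arguments.

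For the subalgebra claim I would verify closure under each operation. The constants $\bot, \top$ lie in $\exclmid_H$ immediately. Closure under $\neg$ follows because if $x$ is complemented by $\neg x$ then $\neg x$ is complemented by $x$, giving $\neg\neg x = x$ and $\neg x \vee \neg\neg x = \top$. For $\wedge$ and $\vee$, given $x, y \in \exclmid_H$ I would check by distributivity that $\neg x \vee \neg y$ is a complement of $x \wedge y$ and $\neg x \wedge \neg y$ a complement of $x \vee y$; uniqueness of complements then yields the de Morgan identities together with membership in $\exclmid_H$. The one genuinely Heyting-flavored step is closure under $\to$: I would prove $x \to y = \neg x \vee y$ for $x \in \exclmid_H$. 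The inequality $\neg x \vee y \leq x \to y$ comes from $x \wedge (\neg x \vee y) = x \wedge y \leq y$; the reverse comes from splitting $x \to y = (x \to y) \wedge (x \vee \neg x)$ across the distributive law and bounding the two pieces by $y$ and $\neg x$. Since $\neg x \vee y$ is a join of complemented elements, it lies in $\exclmid_H$.

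Once $\exclmid_H$ is known to be a subalgebra, that it is a Boolean algebra is nearly automatic: it is a bounded distributive lattice (inherited from $H$) in which every element is complemented inside the set, and a complemented distributive lattice is Boolean; the computation $x \to y = \neg x \vee y$ confirms that the implication restricts to the Boolean one. For maximality I would take an arbitrary Boolean subalgebra $B \subseteq H$ sharing the constants $\bot, \top$. For $b \in B$ with complement $b'$ in $B$ we have $b \wedge b' = \bot$ and $b \vee b' = \top$; the auxiliary observation gives $b' \leq \neg b$, whence $\top = b \vee b' \leq b \vee \neg b$, forcing $b \in \exclmid_H$, so $B \subseteq \exclmid_H$.

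I expect the main obstacle to be the $\to$-closure step, as it is the only place where the full Heyting structure rather than mere lattice complementation is needed; everything else reduces to distributivity plus uniqueness of complements, both of which are available in $H$.
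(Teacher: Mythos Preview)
The paper states Proposition~\ref{prop:excluded_mid_largest_BA} without proof, treating it as a standard fact about Heyting algebras; there is no argument in the text to compare against. Your proposal is correct and supplies exactly the kind of verification the paper omits: the identification of $\exclmid_H$ with the set of complemented elements, closure of complemented elements under the lattice operations via distributivity and uniqueness of complements, the explicit computation $x \to y = \neg x \vee y$ for $x \in \exclmid_H$ to handle $\to$-closure, and the maximality argument via $b' \leq \neg b$ are all sound. The only place I would tighten the exposition is the $\neg$-closure step: your phrase ``giving $\neg\neg x = x$'' tacitly uses $x \leq \neg\neg x$ (which you have, from $x \wedge \neg x = \bot$ and the adjunction) to get $\neg x \vee \neg\neg x \geq \neg x \vee x = \top$ before invoking uniqueness of complements; making that explicit would remove any appearance of circularity.
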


\begin{theorem}[Glivenko \cite{glivenko-main}]\label{thm:glivenko}
The subposet $\doubleneg_H$ of a Heyting algebra $H$ always forms a Boolean algebra; furthermore, it is a $\wedge$-subsemilattice of $H$.
\end{theorem}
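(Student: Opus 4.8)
The plan is to realize $\doubleneg_H$ as the image of the double-negation operator $j := \neg\neg$ and to exploit the fact that $j$ is a \emph{nucleus}: a monotone, inflationary, idempotent map that commutes with binary meets. First I would record the standard intuitionistic identities $x \leq \neg\neg x$, $\neg\neg\neg x = \neg x$, and the antitonicity of $\neg$ (whence $j$ is monotone and idempotent). These yield two immediate payoffs. On the one hand, $\doubleneg_H = \SetComp{\neg\neg x}{x \in H} = \SetComp{\neg x}{x \in H}$ is exactly the set of fixed points of $j$, since $y = \neg x$ forces $\neg\neg y = \neg\neg\neg x = \neg x = y$. On the other hand, $\bot = \neg\top$ and $\top = \neg\bot$ both lie in $\doubleneg_H$, so the subposet is bounded.

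The technical heart of the argument is the meet-preservation identity $\neg\neg(x \wedge y) = \neg\neg x \wedge \neg\neg y$. The inclusion $\leq$ is immediate from monotonicity of $j$; the reverse inclusion $\neg\neg x \wedge \neg\neg y \leq \neg\neg(x \wedge y)$ is the one genuinely intuitionistic step, which I would prove by repeatedly chasing the Heyting adjunction $a \wedge c \leq b \iff c \leq a \to b$. With this identity established, the $\wedge$-subsemilattice claim is automatic: if $x, y \in \doubleneg_H$, then $\neg\neg(x \wedge y) = \neg\neg x \wedge \neg\neg y = x \wedge y$, so the meet computed in $H$ already lands in $\doubleneg_H$.

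Next I would supply a join. The join of $H$ need not preserve regularity --- indeed Remark \ref{body:doubleneg-not-subalgebra} notes that $\doubleneg_H$ is generally not a subalgebra --- so instead I define the regularized join $x \sqcup y := \neg\neg(x \vee y)$ and check it is the least upper bound inside the subposet: it dominates $x$ and $y$ because $x \vee y \leq \neg\neg(x \vee y)$, and any regular upper bound $z$ of $x$ and $y$ satisfies $z = \neg\neg z \geq \neg\neg(x \vee y)$ by monotonicity. This makes $\doubleneg_H$ a bounded lattice, and distributivity drops out of the meet-preservation identity: for regular $x,y,z$, pulling $x = \neg\neg x$ through $j$ and using distributivity of $H$ collapses $x \wedge (y \sqcup z)$ to $(x \wedge y) \sqcup (x \wedge z)$.

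Finally, to upgrade the distributive lattice to a Boolean algebra I would exhibit complements. For regular $x$, the element $\neg x$ is again regular, meets $x$ to $\bot$, and joins with $x$ to $\top$, since $\neg(x \vee \neg x) = \neg x \wedge \neg\neg x = \neg x \wedge x = \bot$ gives $x \sqcup \neg x = \neg\neg(x \vee \neg x) = \neg \bot = \top$. A complemented distributive lattice is a Boolean algebra, which completes the proof. I expect the reverse inclusion in the meet-preservation identity to be the only real obstacle; every remaining step is either a direct appeal to the adjunction or a formal lattice computation.
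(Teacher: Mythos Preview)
Your argument is correct and is essentially the standard proof of Glivenko's theorem via the double-negation nucleus. Note, however, that the paper does not supply its own proof of this statement: Theorem~\ref{thm:glivenko} is quoted from the literature (attributed to Glivenko~\cite{glivenko-main}) and used as a black box, so there is nothing to compare your approach against. Your write-up would serve perfectly well as the omitted proof; the only step worth expanding for a reader is the reverse inequality $\neg\neg x \wedge \neg\neg y \leq \neg\neg(x \wedge y)$, which you have correctly flagged as the one place requiring real work with the Heyting adjunction.
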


\begin{body}\label{body:doubleneg-not-subalgebra}
The containment $\exclmid_H \subseteq \doubleneg_H$ holds in any Heyting algebra $H$. Moreover, if $\exclmid_H \neq \doubleneg_H$, then Proposition~\ref{prop:excluded_mid_largest_BA} along with Theorem~\ref{thm:glivenko} gives that $\doubleneg_H$ cannot be a sub-algebra of $H$.
\end{body}

\begin{proposition}\label{prop:if_big_excluded_middle_then_doubleneg_is_subalgebra}
Let $H$ be a finite Heyting algebra. If $\ds_H(x \lor \neg x = \top) \geq 1/2$, then $\exclmid_H = \doubleneg_H$.
\end{proposition}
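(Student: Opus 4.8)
The plan is to prove the contrapositive: supposing $\exclmid_H \neq \doubleneg_H$, I would derive $\ds_H(x \vee \neg x = \top) < \tfrac{1}{2}$, which by Definition~\ref{def:basic-loci} is exactly $|\exclmid_H| < \tfrac{1}{2}|H|$. By~\ref{body:doubleneg-not-subalgebra} the containment $\exclmid_H \subseteq \doubleneg_H$ holds in every Heyting algebra, so the assumption $\exclmid_H \neq \doubleneg_H$ says precisely that this containment is \emph{proper}.

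The first key step is to upgrade this set-theoretic inclusion to a proper inclusion of Boolean algebras. By Proposition~\ref{prop:excluded_mid_largest_BA}, $\exclmid_H$ is a Boolean subalgebra of $H$; by Theorem~\ref{thm:glivenko}, $\doubleneg_H$ is a Boolean algebra whose meet is inherited from $H$ but whose join is $a \sqcup b = \neg\neg(a \vee b)$. I would check that the Boolean operations of $\doubleneg_H$ restrict to those of $\exclmid_H$: meets agree since both are inherited from $H$; negation is $\neg(\cdot)$ in each; and for $a,b \in \exclmid_H$ we have $a \vee b \in \exclmid_H \subseteq \doubleneg_H$, so $\neg\neg(a\vee b) = a \vee b$ and the two joins coincide. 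Hence $\exclmid_H$ is a genuine Boolean subalgebra of $\doubleneg_H$, and the inclusion is proper. Now I invoke the elementary fact that a proper subalgebra of a finite Boolean algebra has at most half its cardinality (its atoms partition those of the larger algebra into strictly fewer blocks, so $2^{j} \le 2^{k-1}$), giving $|\exclmid_H| \le \tfrac{1}{2}|\doubleneg_H|$.

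It remains to pass from $\doubleneg_H$ to $H$ with \emph{strict} inequality. Here I use that $\exclmid_H \neq \doubleneg_H$ forces $H$ to be non-Boolean: if $H$ were Boolean, every element would be complemented and double-negation-closed, so $\exclmid_H = H = \doubleneg_H$, contradicting properness. Since double-negation elimination is a classical principle (Example~\ref{example:classical-principles}, item 2), the equality $\doubleneg_H = H$ would force $H$ to be Boolean; as $H$ is not Boolean we obtain $\doubleneg_H \subsetneq H$ and thus $|\doubleneg_H| \le |H| - 1$. Combining the two bounds, $|\exclmid_H| \le \tfrac{1}{2}|\doubleneg_H| \le \tfrac{1}{2}(|H|-1) < \tfrac{1}{2}|H|$, which is the desired conclusion $\ds_H(x\vee\neg x = \top) < \tfrac{1}{2}$, completing the contrapositive.

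Setting aside the routine lattice inequality, the step demanding the most care is the second one: $\doubleneg_H$ is in general not a sublattice of $H$ (its join is the double negation of the join in $H$, cf.~\ref{body:doubleneg-not-subalgebra}), so one must genuinely confirm that $\exclmid_H$ sits inside $\doubleneg_H$ as a \emph{subalgebra} before the power-of-two counting can be applied. The other subtlety worth flagging is the extraction of a strict inequality rather than merely $\ds_H \le \tfrac{1}{2}$; this is precisely what the non-Booleanness of $H$, i.e.\ $|\doubleneg_H| < |H|$, supplies.
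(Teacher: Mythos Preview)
Your argument is correct and rests on the same cardinality idea as the paper's proof (both $\exclmid_H$ and $\doubleneg_H$ are finite Boolean algebras, hence have power-of-two size, so a proper containment forces a factor of at least $2$), just run in contrapositive form. The one substantive difference is the step you flag as ``demanding the most care'': you take pains to check that $\exclmid_H$ sits inside the Glivenko algebra $\doubleneg_H$ as a Boolean \emph{subalgebra} before invoking the proper-subalgebra halving lemma. The paper avoids this entirely. It simply observes that $|\exclmid_H| = 2^n$ and $|\doubleneg_H| = 2^m$ as abstract Boolean algebras, and the bare set-theoretic inclusion $\exclmid_H \subseteq \doubleneg_H$ already gives $n \le m$; no compatibility of operations is required. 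Similarly, your separate extraction of the strict inequality via $|\doubleneg_H| \le |H|-1$ is replaced in the paper by absorbing the boundary case $|\doubleneg_H| = |H|$ directly: that case makes $H$ Boolean, whence $\exclmid_H = H = \doubleneg_H$. Both arguments are sound; the paper's is shorter precisely because it never needs the subalgebra verification you worried about.
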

\begin{proof}
Since $\exclmid_H$ and $\doubleneg_H$ are both Boolean algebras, we can find numbers $n,m \in \mathbb{N}$ such that $   \lvert   \exclmid_H   \lvert    = 2^n$ and $   \lvert   \doubleneg_H   \lvert    = 2^m$. Since $\exclmid_H \subseteq \doubleneg_H$, we must have either $n = m$ (as desired) or \[   \lvert   \doubleneg_H   \lvert    \geq 2   \lvert   \exclmid_H   \lvert    = 2 \ds_H(x \lor \neg x = \top)    \lvert   H   \lvert    \geq    \lvert   H   \lvert   .\]
This means that $\doubleneg_H = H$ and so double-negation elimination holds everywhere. But then $H$ is a Boolean algebra which implies that the law of excluded middle holds everywhere as well; in other words $\doubleneg_H = H = \exclmid_H$, as desired.
\end{proof}

\begin{body}
We note that the $\frac{1}{2}$ bound obtained in Proposition~\ref{prop:if_big_excluded_middle_then_doubleneg_is_subalgebra} is not tight. A tight bound~($\frac{2}{5}$) follows immediately from Theorem~\ref{thm:strong-relativism} in Section~\ref{sec:discussion}, using the fact that the smallest Heyting algebra $H$ with $\doubleneg_H \not\subseteq \exclmid_H$ has 5 elements.
\end{body}

\section{Equations in one free variable}\label{sec:1-var}

\begin{body}
In this section we classify all equations in one free variable into two classes: those equations which have finite satisfiability gap and those which do not. The main result of this section, Theorem~\ref{thm:1-var-classification}, states that (up to logical equivalence of first-order formulae) only three equations, $p = \top, p = \bot$, and $p \vee \neg p = \top$ belong to the first class.
\end{body}

\begin{proposition}\label{prop:1-var-trivial}
The equations $p = \top$ and $\neg p = \top$ have finite satisfiability gap $1/2$.
\begin{proof}
Since every Heyting algebra $H$ has at least two elements, we have that $$\ds_{H}(p = \top) = \frac{ \left   \lvert   \SetComp{y \in H}{y = \top}\right   \lvert     }{   \lvert   H   \lvert   } = \frac{1}{   \lvert   H   \lvert   } \leq \frac{1}{2}.$$ Similarly, noticing that $\neg p = \top$ holds only if $p = \bot$, we get that $\ds_{H}(\neg p = \top) \leq \frac{1}{2}$ as well. Both of these gaps are realized in the Heyting algebra with two elements.
\end{proof}
\end{proposition}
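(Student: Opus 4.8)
The plan is to reduce each equation to a counting problem by pinning down its solution set explicitly. First I would observe that the equation $p = \top$ has exactly one solution in any Heyting algebra $H$, namely $\top$ itself, so that $\ds_H(p = \top) = 1/|H|$. Since we adopt the convention that Heyting algebras are nondegenerate (so $\bot \neq \top$, and hence $|H| \geq 2$), this quantity never equals $1$ and is always at most $1/2$. Thus the gap condition of Definition~\ref{def:finite-gap} is met with $\varepsilon = 1/2$: option (1) never occurs, while option (2) always does.

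For the equation $\neg p = \top$, the key step is to show that its solution set is again a singleton, namely $\{\bot\}$. I would argue this directly from the defining adjunction of the Heyting implication: since $\neg p = p \rightarrow \bot$ is by definition the largest $c$ with $p \wedge c \leq \bot$, the condition $\top \leq \neg p$ is equivalent to $p \wedge \top \leq \bot$, i.e. to $p \leq \bot$, i.e. to $p = \bot$. Hence $\ds_H(\neg p = \top) = 1/|H| \leq 1/2$ as well, by the same count as before.

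It then remains to confirm that $1/2$ is the exact, best-possible gap rather than merely some valid one. For this I would exhibit the two-element Heyting algebra $\{\bot, \top\}$, in which each of $p = \top$ and $\neg p = \top$ is satisfied by exactly one of the two elements, giving degree of satisfiability exactly $1/2$. Since this value equals $1 - 1/2$ and is strictly below $1$, no constant $\varepsilon > 1/2$ can witness a gap, so the value $1/2$ is tight.

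I do not anticipate a genuine obstacle here: the whole argument is an elementary count once the solution sets are identified. The only point requiring a little care is the characterization $\neg p = \top \iff p = \bot$, which I would derive cleanly from the universal property of $\rightarrow$ rather than by ad hoc case analysis; and the implicit nondegeneracy assumption $|H| \geq 2$, which is exactly what prevents the trivial one-element algebra from spuriously forcing $\ds = 1$.
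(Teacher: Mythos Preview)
Your proposal is correct and follows essentially the same approach as the paper: count the solution set of each equation as a singleton, use $|H|\geq 2$ to bound $\ds_H$ by $1/2$, and witness tightness with the two-element algebra. The only difference is cosmetic---you spell out the adjunction argument for $\neg p = \top \iff p = \bot$, whereas the paper simply asserts it.
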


\begin{body}
To establish an analogous result for the law of excluded middle (Theorem~\ref{thm:excluded-middle-gap}), we will argue inductively on the multiplicative structure of Heyting algebras by making use of Proposition~\ref{prop:ds-of-product-is-product-of-ds} (recall that this relates the degree of satisfiability of an equation in a product of Heyting algebras to its degree of satisfiability in the factors).
\end{body}

\begin{theorem}\label{thm:excluded-middle-gap}
The equation $p \vee \neg p = \top$ has finite satisfiability gap $1/3$.
\begin{proof}
We proceed by simultaneous induction on the number of elements of the finite algebra $H$ and the number of elements of the set $\exclmid_H$. In the base case, the algebra $H$ satisfies $   \lvert   \exclmid_H   \lvert    = 2$. Consequently, $\ds_H(p \vee \neg p = \top) = \frac{2}{   \lvert   H   \lvert   }$, which is either $1$ or at most $\frac{2}{3}$, as desired.

In the inductive case, the algebra $H$ has $   \lvert   \exclmid_H   \lvert    > 2$.
Since $   \lvert   \exclmid_H   \lvert    > 2$, we can find some $c \in \exclmid_H$ such that $c \neq \bot$ and $c \neq \top$. Consider the subposet $H_c = \SetComp{x \in H}{x \leq c}$ of $H$. This clearly forms a Heyting algebra with the order inherited from $H$.

Define the map $f:H \to H_c \times H_{\neg c}$ by $f(x)=(x \wedge c, x \wedge \neg c)$. Then $f$ is clearly a bounded lattice homomorphism. It is injective because, if $f(x) = f(y)$, then 
\[
x = x \wedge \top
  = x \wedge (c \vee \neg c ) 
  = (x \wedge c) \vee (x \wedge \neg c) 
  = (y \wedge c) \vee (y \wedge \neg c) 
  = y.
\]
Moreover, $f$ is surjective because if $(a,b) \in H_c \times H_{\neg c}$, then 
\begin{align*}
    f(a \vee b) &= ((a \vee b) \wedge c, (a \vee b) \wedge \neg c)\\
    &=((a \wedge c) \vee (b \wedge c), (a \wedge \neg c) \vee (b \wedge  \neg c)) \\
    &=(a \vee (b \wedge c), (a \wedge \neg c) \vee b) \\
    &= (a \vee \bot, \bot \vee b) &(\text{since } c \text{ is in the center})\\
    &=(a,b)
\end{align*}

By the finiteness of $H$, it follows that $f$ preserves implications as well, and is therefore a Heyting algebra isomorphism.

Since $   \lvert   H_c   \lvert    <    \lvert   H   \lvert   $ and $   \lvert   H_{\neg c}   \lvert    <    \lvert   H   \lvert   $, the inductive hypothesis applies, and gives us that either $\ds_{H_c}(p \vee \neg p = \top) = 1$ and $\ds_{H_{\neg c}}(p \vee \neg p = \top) = 1$ both hold, or at least one of $\ds_{H_c}(p \vee \neg p = \top) \leq \frac{2}{3}$ or $\ds_{H_{\neg c}}(p \vee \neg p = \top) \leq \frac{2}{3}$ holds. In either case, applying Proposition~\ref{prop:ds-of-product-is-product-of-ds} concludes the proof.
\end{proof}
\end{theorem}

\begin{body}
The $\frac{1}{3}$ bound for the satisfiability gap of the equation $x \vee \neg x = \top$ is tight and indeed it is realized in the three-element Heyting algebra. Note that Theorem~\ref{thm:lem-finite-infinite} gives rise to a substantially different proof of Theorem~\ref{thm:excluded-middle-gap}. 
\end{body}

\begin{figure}
    \centering
\begin{tikzcd}
                                                    &                                                                          & \top \arrow[d, no head, dotted]                                                 &     &                                    &                              \\
                                                    &                                                                          & {}                                                                              &     &                                    &                              \\
                                                    & {}                                                                       &                                                                                 & {}  &                                    & i_\infty = \top = d_\infty   \\
i_3 \arrow[rd, no head] \arrow[ru, no head, dotted] &                                                                          & d_3 \arrow[rd, no head] \arrow[lu, no head, dotted] \arrow[ru, no head, dotted] &     & i_{n+1} = i_n \to d_n              & d_{n+1} = i_n \vee d_n       \\
                                                    & \mathbf{d_2} \arrow[rd, no head] \arrow[ld, no head] \arrow[ru, no head] &                                                                                 & i_2 & i_2 = \neg \neg p                  & \mathbf{d_2} = p \vee \neg p \\
\mathbf{i_1} \arrow[rd, no head]                    &                                                                          & \mathbf{d_1} \arrow[ru, no head]                                                &     & \mathbf{i_1} = \neg p              & \mathbf{d_1} = p             \\
                                                    & \mathbf{\bot} \arrow[ru, no head]                                        &                                                                                 &     & \mathbf{i_0} = \bot = \mathbf{d_0} &                             
\end{tikzcd}
\caption{(\textbf{LEFT}) Hasse diagram of the Rieger-Nishimura lattice.
(\textbf{RIGHT}) Recursive definition of the terms of the Rieger-Nishimura lattice. Theorem~\ref{thm:1-var-classification} states that the equations $i_0 = \top$, $i_1 = \top$, $d_1 = \top$, $d_2 = \top$ and $i_\infty = \top$ (corresponding elements marked in \textbf{bold}) are the only ones with finite satisfiability gap.
}
\label{fig:RN_Lattice}
\end{figure}

\begin{body}
The formulae in one free variable which are not logically equivalent to those that occur in Proposition~\ref{prop:1-var-trivial} and Theorem~\ref{thm:excluded-middle-gap} do not have finite satisfiability gap. To show this (Lemma~\ref{lemma:1-var-negative-direction}) we need to recall the definition of the Rieger-Nishimura lattice (Figure~\ref{fig:RN_Lattice}).
\end{body}

\begin{definition}\label{def:rieger-nishimura-lattice}
We define the sequences $d$ and $i$ of \emph{disjunctive} and \emph{implicative Rieger-Nishimura formulae} in the free variable $p$ by mutual recursion as follows:
\begin{align*}
    & d_0 = \bot, & \quad & i_0 = \bot, \\
    & d_1 = p, & \quad & i_1 = \neg p, \\
    & d_{n+1} = i_n \vee d_n, & \quad & i_{n+1} = i_n \rightarrow d_n. 
\end{align*}
The \emph{Rieger-Nishimura lattice} consists of the formulae $\top, d_n, i_n$ for all $n \in \mathbb{N}$, equipped with the ordering defined by $a \leq_{RN} b$ precisely if $a \rightarrow b$ holds in intuitionistic propositional logic.
\end{definition}

\begin{theorem}[Rieger \cite{rieger-nishimura-lattice}]\label{thm:rieger-nishimura}
As a Heyting algebra, the Rieger-Nishimura lattice is isomorphic to the free Heyting algebra on one generator.
\end{theorem}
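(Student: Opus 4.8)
The plan is to realize the free Heyting algebra on one generator concretely as the Lindenbaum--Tarski algebra $F$ of intuitionistic propositional logic in the single variable $p$: its elements are one-variable formulae modulo intuitionistic provable equivalence, its operations are induced by the connectives $\wedge,\vee,\rightarrow$ (with $\bot,\top$ the constants), and its order is exactly $[\varphi]\leq[\psi]$ iff $\varphi\rightarrow\psi$ is intuitionistically provable. By the universal property, this $F$ is free on the class $[p]$; in particular $F$ is generated, as a Heyting algebra, by the single element $[p]$. The map $\iota$ sending each Rieger--Nishimura symbol of Definition~\ref{def:rieger-nishimura-lattice} to its class in $F$ is order-preserving and order-reflecting by construction, since the ordering on the Rieger--Nishimura lattice was \emph{defined} to be provable implication, exactly as in $F$. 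It therefore suffices to prove that $\iota$ is a bijection onto $F$: surjectivity together with injectivity will make $\iota$ an order-isomorphism, whence a Heyting isomorphism by the argument in the final paragraph.

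First I would establish \emph{surjectivity} via a closure argument. Let $R\subseteq F$ be the image of $\iota$, i.e.\ the classes $[\top],[\bot]$ and $[d_n],[i_n]$ for $n\geq 1$. Since every one-variable formula is built from $p,\bot,\top$ using $\wedge,\vee,\rightarrow$, and $F$ is generated by $[p]=[d_1]\in R$, it is enough to show that $R$ is closed under the three binary operations. This reduces to proving a finite ``operation table'' of provable-equivalence identities governing how $\wedge,\vee,\rightarrow$ act on the formulae $d_n,i_n$. Two of these hold by definition, namely $d_{n+1}=i_n\vee d_n$ and $i_{n+1}=i_n\rightarrow d_n$; the remaining cases (the meets $d_m\wedge d_n$, $i_m\wedge i_n$, $i_m\wedge d_n$, the cross-joins, and implications such as $d_m\rightarrow d_n$ and $i_m\rightarrow i_n$) are verified by induction on $n$ using only the recursion and elementary intuitionistic derivations. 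Proving that iterating the operations never produces a genuinely new class is the technical heart of the argument, and the step I expect to be the main obstacle.

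Next I would prove \emph{injectivity}, i.e.\ that the listed formulae are pairwise non-equivalent, which is cleanest to handle semantically. I would introduce the canonical family of finite rooted Kripke frames obtained by truncating the Rieger--Nishimura ladder, together with the evident valuation of $p$, and show that each successive formula $d_n$ (respectively $i_n$) is the first in the hierarchy to become forced at a designated node. The resulting truth-value profiles across this family are then all distinct, which separates the classes and simultaneously confirms the covering relations drawn in the Hasse diagram of Figure~\ref{fig:RN_Lattice}. Equivalently, any two candidates can be separated by exhibiting a finite Heyting algebra and an assignment of $p$ on which they disagree.

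Finally, once $\iota$ is known to be an order-isomorphism between two Heyting algebras, it is automatically a Heyting isomorphism: in any Heyting algebra the meet, join, bottom and top are the order-theoretic infimum, supremum, least and greatest elements, while the relative pseudocomplement is determined by the order via $a\rightarrow b=\max\SetComp{c}{a\wedge c\leq b}$. An order-isomorphism preserves all of these, so $\iota$ preserves $\wedge,\vee,\rightarrow,\bot,\top$, completing the proof. As an alternative to the explicit closure computation, one could instead invoke Esakia duality and compute the dual of the free algebra directly, recovering the ladder frame; but the Lindenbaum-algebra route above keeps the argument self-contained.
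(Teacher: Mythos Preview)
The paper does not prove this theorem: it is stated with attribution to Rieger and a citation \cite{rieger-nishimura-lattice}, and then used as a black box in the one-variable classification. There is therefore no proof in the paper against which to compare your proposal.

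For what it is worth, your outline is the standard route to Rieger's result and is sound. Identifying the free one-generated Heyting algebra with the one-variable Lindenbaum--Tarski algebra $F$, showing that the set of Rieger--Nishimura classes is closed under $\wedge,\vee,\rightarrow$ (hence exhausts $F$), and separating the listed formulae by finite Kripke models is precisely how the theorem is normally established; your closing observation that an order-isomorphism between Heyting algebras automatically respects $\wedge,\vee,\rightarrow,\bot,\top$ is correct. The only genuinely laborious step is the closure table you flag as the main obstacle, and you are right that it is the technical heart; once that bookkeeping is carried out the rest follows as you describe.
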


\begin{body}
It follows from Theorem~\ref{thm:rieger-nishimura} that every system of equations in one free variable $p$ is logically equivalent to an equation of the form $\varphi(p) = \top$, where $\varphi$ belongs to the Rieger-Nishmiura lattice. Notice that, in the notation of Definition~\ref{def:rieger-nishimura-lattice}, Proposition~\ref{prop:1-var-trivial} and Theorem~\ref{thm:excluded-middle-gap} show that the equations $i_0 = \top$, $i_1 = \top$, $d_1 = \top$ and $d_2 = \top$ all have finite satisfiability gap.
\end{body}

\begin{proposition}\label{prop:no-gap-1-var-easy}
The equation $\neg \neg p = \top$ has no finite satisfiability gap. Furthermore, we have $\ds_{H \oplus_k \top}(\neg \neg p = \top) < 1$ and $\lim_{k \to \infty}\ds_{H \oplus_k \top}(\neg \neg p = \top) = 1$ for any Heyting algebra $H$ and $k \in \mathbb{N}$.
\end{proposition}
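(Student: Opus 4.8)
The plan is to compute $\ds_{H \oplus_k \top}(\neg\neg p = \top)$ exactly, since all three assertions will then fall out of a single closed formula. First I would reduce the condition $\neg\neg p = \top$ to the more tractable $\neg p = \bot$: in any Heyting algebra the equation $\neg q = \top$ holds precisely when $q = \bot$ (because $q \to \bot = \top$ iff $q \leq \bot$), so applying this with $q = \neg p$ shows that $\neg\neg p = \top$ characterises exactly the \emph{dense} elements, namely those $p$ with $\neg p = \bot$. Thus the task becomes counting dense elements in $H \oplus_k \top$.

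Next I would analyse how density behaves under a single adjunction $H \oplus \top$, writing $u$ for the newly added top, so that the old top $1$ of $H$ now satisfies $1 < u$. The two key computations are: (i) the new element $u$ is dense, since $u \wedge c = c$ for every $c \leq u$ and $u \wedge u' = u'$ for any further adjoined top $u' \geq u$, so the only $c$ with $u \wedge c = \bot$ is $c = \bot$, whence $\neg u = \bot$; and (ii) the negation of any existing nonzero element is unchanged, because for $p \in H$ with $p \neq \bot$ we have $p \wedge u = p \neq \bot$, so $u$ never belongs to $\{c : p \wedge c = \bot\}$. That set therefore remains a subset of the downset $H$ and retains the same maximum $\neg_H p$. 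Consequently an element of $H \setminus \{\bot\}$ is dense in $H \oplus \top$ exactly when it was dense in $H$, whereas $\bot$ is dense in neither (as $\neg\neg\bot = \neg u = \bot \neq \top$). Adjoining one top thus increases both the total number of elements and the number of dense elements by exactly one.

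Iterating this observation, formally by induction on $k$, gives that $H \oplus_k \top$ has $|H| + k$ elements, of which exactly $D + k$ are dense, where $D$ denotes the number of dense elements of $H$. Hence $\ds_{H \oplus_k \top}(\neg\neg p = \top) = \frac{D + k}{|H| + k}$. Because $\bot$ is never dense we have $D \leq |H| - 1 < |H|$, so this ratio is strictly below $1$ for every $k$, which establishes the first ``furthermore'' claim; and letting $k \to \infty$ yields the limit $1$, establishing the second. The absence of a finite satisfiability gap is then immediate by specialisation: fixing any nontrivial $H$ (for instance the two-element Boolean algebra, where $D = 1$ and $|H| = 2$), for each $\varepsilon > 0$ one can choose $k$ large enough that $1 - \varepsilon < \frac{D + k}{|H| + k} < 1$, producing a finite model on which the degree of satisfiability is neither $1$ nor at most $1 - \varepsilon$.

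The only delicate point, and the step I would verify most carefully, is the negation computation in the adjoined chain: that placing new elements above the old top leaves the negations of all existing nonzero elements intact while forcing every freshly adjoined element to have negation $\bot$. Once that is pinned down, the remainder is bookkeeping of cardinalities.
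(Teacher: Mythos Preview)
Your proof is correct and follows essentially the same approach as the paper: both reduce $\neg\neg p = \top$ to the condition $\neg p = \bot$, observe that every newly adjoined top element is dense while $\bot$ never is, and use this to show the degree of satisfiability in $H \oplus_k \top$ is strictly below $1$ yet tends to $1$. Your version is somewhat more thorough, in that you also verify that negations of existing nonzero elements are unchanged under adjunction and thereby obtain the exact closed formula $\ds_{H \oplus_k \top}(\neg\neg p = \top) = (D+k)/(|H|+k)$, whereas the paper is content with the implicit lower bound coming from the $k$ adjoined elements alone (and uses the special case of chains for the ``no gap'' assertion).
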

\begin{proof}
If $\neg \neg p = \top$, then $\bot = \neg p \wedge \neg \neg p = \neg p \wedge \top = \neg p$.
Since every element of the $n$-element chain, with the exception of $\bot$ itself, negates to bottom (for any $n$), the sequence $(C_n)_{n\in \mathbb{N}}$ of chains witnesses the fact that $\neg \neg p = \top$ has no finite satisfiability gap.  Furthermore, for any Heyting algebra $H$, every element $H \oplus_{k} \top \setminus H$ also negates to bottom (but $\bot$ itself does not); thus the rest of the claim follows by the same argument as above.
\end{proof}

\begin{body}
Next consider $i_3$ which is equivalent to $\neg \neg p \to p$ and the equation $i_3 = \top$ (i.e. the double-negation-elimination equation). Since both double-negation elimination and the law of excluded middle are classical principles, it is perhaps surprising to discover that, while $p \vee \neg p = \top$ has a gap (Theorem~\ref{thm:excluded-middle-gap}), the equation $\neg \neg p = p$ does not (Corollary~\ref{cor:dneg-has-no-gap}).
\end{body}

\begin{proposition}\label{prop:image-of-neg-is-dneg-set}
In every Heyting algebra $H$, the image of the negation map $x \mapsto \neg x$ coincides with the set $\doubleneg_H$.
\begin{proof}
The identity $\neg\neg\neg x = \neg x$ holds in every Heyting algebra, so every $y \in H$ in the image of the negation map satisfies $\neg\neg y = \neg\neg\neg x = \neg x = y$. Conversely, if an element $x \in H$ satisfies $\neg\neg x = x$, then $x$ is the image of $\neg x$ under the negation map.
\end{proof}
\end{proposition}

\begin{lemma}\label{lemma:neg-adjoin-top}
For any Heyting algebra $H$, let $\iota: H \hookrightarrow H \oplus \top$ be the obvious lattice inclusion of $H$ into $H \oplus \top$ (i.e. such that $(H \oplus \top ) \setminus \iota(H) = \{\top\}$). Then, for any $x \in H$ which is not $\bot_H$, we have $\neg  \iota( x) = \iota( \neg x)$.
\begin{proof}
If $\neg \iota(x) = \max \SetComp{c \in H \oplus T}{\iota(x) \wedge c = \bot} = \top$, then $\iota(x) = \iota(x) \wedge \top = \bot$ and hence $x = \bot_{H}$ since $\iota$ is an injection. Thus, for any $x \in H \setminus \{\bot\}$, there is a $y \in H \setminus \{\bot\}$ such that $\neg \iota(x) = \iota(y)$ and so we must have $y = \neg x$, as desired. 
\end{proof}
\end{lemma}

\begin{corollary}\label{cor:dneg-has-no-gap}
The formula $\neg\neg x = x$ has no finite satisfiability gap.
\end{corollary}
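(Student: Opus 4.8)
The plan is to exhibit a family of finite Heyting algebras whose degree of satisfiability for $\neg\neg x = x$ converges to $1$ from below without ever reaching it; this rules out any $\varepsilon > 0$ serving as a finite gap. Writing $B_n$ for the Boolean algebra on $2^n$ elements, the natural candidates (following the discussion preceding the statement, cf.\ \ref{body:doubleneg-not-subalgebra}) are the algebras $B_n \oplus \top$ obtained by adjoining a fresh top element. Since $\ds_{B_n \oplus \top}(\neg\neg x = x) = |\doubleneg_{B_n \oplus \top}| / |B_n \oplus \top|$ and $|B_n \oplus \top| = 2^n + 1$, everything reduces to counting $\doubleneg_{B_n \oplus \top}$.

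For this count I would use Lemma~\ref{lemma:image-of-neg-is-dneg-set} to identify $\doubleneg_{B_n \oplus \top}$ with the image of the negation map, and then Lemma~\ref{lemma:neg-adjoin-top} to transport negation from $B_n$ along the inclusion $m$. For every $b \in B_n$ with $b \neq \bot$ we have $\neg(m(b)) = m(\neg b)$, so negation on the copy $m(B_n)$ agrees with the Boolean complement, except that $\bot$ now negates to the new top rather than to $m(1_{B_n})$. Tracking double negations: every element of $m(B_n)$ other than the old top $m(1_{B_n})$ still double-negates to itself (applying the lemma twice is legitimate precisely when $b \notin \{\bot, 1_{B_n}\}$, and the case $b = \bot$ is checked directly), the new top satisfies $\neg\neg \top = \neg \bot = \top$, and only $m(1_{B_n})$ fails, since $\neg\neg\, m(1_{B_n}) = \neg \bot = \top \neq m(1_{B_n})$. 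Hence exactly one element is lost, giving $|\doubleneg_{B_n \oplus \top}| = 2^n$.

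Combining these, $\ds_{B_n \oplus \top}(\neg\neg x = x) = \frac{2^n}{2^n+1} = 1 - \frac{1}{2^n+1}$, which is strictly below $1$ for every $n$ yet tends to $1$ as $n \to \infty$. Thus for any proposed gap $\varepsilon > 0$ a sufficiently large $n$ yields a finite model with degree of satisfiability in the forbidden interval $(1-\varepsilon, 1)$, so $\neg\neg x = x$ has no finite satisfiability gap. The only real subtlety is the bookkeeping around $\bot$ and the two top elements: one must notice that the single element whose double-negation elimination breaks is the former top $1_{B_n}$ (whose double negation is pushed up to the adjoined top), while the freshly adjoined top itself remains in $\doubleneg$.
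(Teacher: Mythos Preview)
Your proof is correct and follows the same overall route as the paper: the same witnessing family $B_n \oplus \top$, the same identification of $\doubleneg$ with the image of negation via Lemma~\ref{lemma:image-of-neg-is-dneg-set}, and the same use of Lemma~\ref{lemma:neg-adjoin-top} to transport negation from $B_n$. The one substantive difference lies in how you obtain the exact count $|\doubleneg_{B_n\oplus\top}| = 2^n$. The paper only extracts the lower bound $\geq 2^n$ from those lemmas and then invokes Glivenko's theorem (Theorem~\ref{thm:glivenko}) to force equality: $\doubleneg$ is a Boolean algebra, hence of order a power of two, and $2^n \leq |\doubleneg| \leq 2^n + 1 < 2^{n+1}$ leaves only $2^n$. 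You instead track double negations element by element and pinpoint the sole failing element as the old top $m(1_{B_n})$. Your argument is slightly more elementary (no appeal to Glivenko) and more explicit about \emph{which} element breaks; the paper's is terser but imports a structural theorem. Your care in excluding $b \in \{\bot, 1_{B_n}\}$ when iterating Lemma~\ref{lemma:neg-adjoin-top} is also well placed, since indeed $\neg m(\bot) = \top_{B_n\oplus\top} \neq m(1_{B_n})$.
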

\begin{proof}
Consider the lattice inclusion $\iota: B_n \hookrightarrow B_n \oplus \top$ as in Lemma~\ref{lemma:neg-adjoin-top} of a Boolean algebra $B_n$. It is enough to show that $\iota(\top_{B_n})$ is the only element of $B_n \oplus T$ for which double negation elimination does not hold since then one has that \[\lim_{n \to \infty}\bigl( \ds_{B_n \oplus \top}(\neg \neg x = x) \bigr)= \lim_{n \to \infty}\Bigl( \frac{\lvert B_n \rvert}{\lvert B_n \oplus \top \rvert} \Bigr) = 1.\]  To that end, using Lemma~\ref{lemma:neg-adjoin-top}, we have that $\neg \neg \iota(\top_{B_n}) = \neg \bot = \top \neq \iota(\top_{B_n})$. Conversely $\neg \neg \iota(\bot_{B_n}) = \neg \neg \bot = \bot = \iota(\bot_{B_n})$ and, for all $b \in B_n \setminus \bot_{B_n}$, we have $\neg \neg \iota(b) = \iota(\neg \neg b) = \iota(b)$ by Lemma~\ref{lemma:neg-adjoin-top} and since $B_n$ is Boolean.
\end{proof}

\begin{body}
Proposition~\ref{prop:no-gap-1-var-easy} and Corollary~\ref{cor:dneg-has-no-gap} show that, for $n \geq 2$ and $m \geq 3$, the equations $i_n = \top$ and $d_m = \top$ have no finite satisfiability gap. Lemma~\ref{lemma:1-var-negative-direction} below thus completes our proof of Theorem~\ref{thm:1-var-classification}.
\end{body}

\begin{body}
Proposition~\ref{prop:no-gap-1-var-easy} suggests that the sequence $(H \oplus_k \top)_{k \in \mathbb{N}}$ is a good candidate for showing that the equations $i_n = \top$ and $d_m = \top$ for $n \geq 4$ and $m \geq 3$ have no gap, since the degree of satisfiability of both of these equations in $H \oplus_{k} \top$ tends to $1$ in as $k$ tends to infinity (this follows by the ordering of the Rieger-Nishimura lattice and from the fact that the degree of satisfiability of $i_2=\top$ in $H \oplus_k \top$ tends to $1$ by the proof of Proposition~\ref{prop:no-gap-1-var-easy}). However, in itself this does not suffice to show that the equation $\varphi = \top$ has no gap for all $\varphi \geq_{\textsc{RN}} i_2$, since we have not yet ruled out the case that $\ds_{H \oplus_{k} \top}(\varphi = \top) = 1$ for all $k$. In Lemma~\ref{lemma:does-not-move} we show that, starting with some $H$ such that $\ds_{H}(\varphi = \top) \neq 1$, the elements of $H$ which do not satisfy the equation $\varphi = \top$ in $H$ also do not satisfy it in $H \oplus_{k} \top$.
\end{body}

\begin{lemma}\label{lemma:does-not-move}
Let $\varphi$ be an element in the Rieger-Nishimura lattice satisfying "$\varphi \in (\uparrow_{\textsc{RN}} d_2)$ or $\varphi \in (\uparrow_{\textsc{RN}} i_2)$", $H$ be a Heyting algebra and $\iota: H \hookrightarrow H \oplus \top$ be the lattice-inclusion defined so that $(H \oplus \top) \setminus \iota(H) = \{\top\}$. For any $x \in H$, if $\varphi(x) \neq \top$, then $\varphi(\iota(x)) = \iota(\varphi(x))$.
\end{lemma}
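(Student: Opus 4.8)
The plan is to prove, by simultaneous induction on $n$, a statement slightly stronger than the lemma which also records \emph{where} the value of a Rieger--Nishimura formula lands when it evaluates to the top. Write $\top_H$ for the top of $H$ and $\top^{\oplus}$ for the new top of $H \oplus \top$ (the unique element of $(H \oplus \top) \setminus \iota(H)$), so that the hypothesis of the lemma reads $\varphi(x) \neq \top_H$. Every $\varphi \in \uparrow_{\textsc{RN}}\{d_2, i_2\}$ is either $\top$ (in which case $\varphi(x) = \top_H$ always and the statement is vacuous) or equals $d_n$ or $i_n$ for some $n \geq 2$, so it suffices to treat the formulae $d_n$ and $i_n$.

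First I would record the behaviour of $\iota$ with respect to the Heyting operations. Since $\iota$ is a lattice inclusion it preserves $\wedge$ and $\vee$; and a direct computation in $H \oplus \top$ gives, for $a,b \in H$, that $\iota(a) \to \iota(b) = \iota(a \to b)$ when $a \not\leq b$, while $\iota(a) \to \iota(b) = \top^{\oplus}$ when $a \leq b$ (i.e.\ exactly when $a \to b = \top_H$). I also need the identities $\top^{\oplus} \to \iota(b) = \iota(b)$, $z \to \top^{\oplus} = \top^{\oplus}$ and $\top^{\oplus} \vee z = \top^{\oplus}$ for all $z \in H \oplus \top$. Lemma~\ref{lemma:neg-adjoin-top} is the special case of the implication identity at $b = \bot$.

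The strengthened invariant $Q(n)$ I would carry is: $i_n(\iota x) \in \{\iota(i_n(x)),\, \top^{\oplus}\}$, with $i_n(\iota x) = \top^{\oplus}$ only if $i_n(x) = \top_H$, and the same statement for $d_n$. The whole point of this strengthening --- and the step I expect to be the real obstacle --- is that the naive claim ``non-top implies $\iota$ commutes'' does not support an induction on its own: in the step for $i_{n+1} = i_n \to d_n$ a subterm such as $i_n(x)$ can equal $\top_H$ even when $i_{n+1}(x) \neq \top_H$, and then I must know whether $i_n(\iota x)$ is $\iota(\top_H)$ or $\top^{\oplus}$ in order to evaluate the outer implication correctly. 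The invariant $Q(n)$ is exactly the bookkeeping needed, since it pins down that a top value of $i_n$ is always realized as $\top^{\oplus}$ in $H \oplus \top$.

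For the base case $n = 1$ I would use $d_1(\iota x) = \iota x = \iota(d_1(x))$ and, via Lemma~\ref{lemma:neg-adjoin-top}, $i_1(\iota x) = \neg \iota x = \iota(\neg x) = \iota(i_1(x))$ for $x \neq \bot$, while for $x = \bot$ one has $i_1(\bot) = \top_H$ and $i_1(\iota\bot) = \neg \bot = \top^{\oplus}$, matching $Q(1)$. For the inductive step I would assume $Q(n)$ and compute $d_{n+1}(\iota x) = i_n(\iota x) \vee d_n(\iota x)$ and $i_{n+1}(\iota x) = i_n(\iota x) \to d_n(\iota x)$, splitting into the (at most four) cases according to whether each of $i_n(\iota x)$ and $d_n(\iota x)$ is of the form $\iota(\cdot)$ or equals $\top^{\oplus}$, and applying the operation identities above. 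In each case the recursions $d_{n+1}(x) = i_n(x) \vee d_n(x)$ and $i_{n+1}(x) = i_n(x) \to d_n(x)$, together with the facts $\top_H \vee z = \top_H$ and $\top_H \to z = z$ in $H$, show that the resulting value again satisfies $Q(n+1)$; the only mildly delicate case is $i_n(x) = \top_H$ with $d_n(x) \neq \top_H$, where $i_n(\iota x) = \top^{\oplus}$ forces $i_{n+1}(\iota x) = \top^{\oplus} \to \iota(d_n(x)) = \iota(d_n(x)) = \iota(i_{n+1}(x))$. Finally, $Q(n)$ immediately yields the lemma: if $\varphi = d_n$ or $\varphi = i_n$ and $\varphi(x) \neq \top_H$, then the ``only if'' clause of $Q(n)$ forbids $\varphi(\iota x) = \top^{\oplus}$, so $\varphi(\iota x) = \iota(\varphi(x))$, as required.
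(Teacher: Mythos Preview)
Your approach is correct and in fact more careful than the paper's own argument. The paper runs the same induction on the Rieger--Nishimura recursion (with base cases $d_2, i_2$ rather than your $d_1, i_1$) but carries only the bare statement of the lemma as inductive hypothesis; in the $i_{n+1}$ step it then writes $i_{n+1}(\iota(x)) = \iota(i_n(x)) \to \iota(d_n(x))$ ``by the induction hypothesis'', tacitly assuming $i_n(x) \neq \top_H$. That assumption can fail even when $i_{n+1}(x) \neq \top_H$: take $x$ the middle element of the $3$-chain, so that $i_2(x) = \neg\neg x = \top_H$ but $i_3(x) = x \neq \top_H$. Your strengthened invariant $Q(n)$, recording that $\varphi(\iota x)$ is always either $\iota(\varphi(x))$ or $\top^{\oplus}$ (the latter only when $\varphi(x)=\top_H$), is precisely the bookkeeping needed to close this gap; this is what your route buys over the paper's.

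One small imprecision: in the ``delicate case'' $i_n(x) = \top_H$, $d_n(x) \neq \top_H$, your $Q(n)$ as stated (``only if'') allows $i_n(\iota x) = \iota(\top_H)$ as well as $\top^{\oplus}$, contrary to your gloss that a top value is ``always realized as $\top^{\oplus}$''. Both sub-cases work, though: if $i_n(\iota x) = \iota(\top_H)$ then, since $\top_H \not\leq d_n(x)$, your implication identity gives $\iota(\top_H) \to \iota(d_n(x)) = \iota(\top_H \to d_n(x)) = \iota(i_{n+1}(x))$ just the same.
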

\begin{proof}
We prove this by induction. For the base case, consider $d_2$ and $i_2$. Notice that \[\iota(i_2(\bot)) = \iota(\neg \neg \bot) = \iota(\bot) = \bot_{H \oplus \top} = \neg \neg \bot_{H \oplus \top} = \neg \neg \iota(\bot) = i_2(\iota(\bot)).\] Furthermore, for any $x \neq \bot$ such that $i_2(x) \neq \top$ (which implies that $\neg \neg x \neq \top$ and hence $\neg x \neq \bot$) we have
\begin{align*}
    \iota(i_2(x)) &= \iota( \neg \neg x)\\
    &= \neg \iota( \neg x) &\text{(by Lemma~\ref{lemma:neg-adjoin-top} and since } \neg x \neq \bot\text{)} \\
    &= \neg \neg \iota(x) &\text{(by Lemma~\ref{lemma:neg-adjoin-top} and since } x \neq \bot\text{)}\\
    &= i_2(\iota(x)).
\end{align*}
Furthermore, for all $x$ such that $d_2(x) = x \vee \neg x  \neq \top$ (which implies $x \not \in \{\bot, \top\}$) we have that
\begin{align*}
    \iota(d_2(x)) &= \iota(x \vee \neg x) \\&
    = \iota(x) \vee \iota(\neg x) &\text{(since } \iota \text{ is a lattice homomorphism)}\\
    &= \iota(x) \vee \neg \iota(x) &\text{(by Lemma~\ref{lemma:neg-adjoin-top} and since } x \neq \bot) \\
    &= d_2(\iota(x)).
\end{align*} 
This concludes the proof of the base case.

\noindent Now suppose by way of induction that the claim holds if $\varphi \in \{i_n, d_n\}$ and note throughout that if $\psi \leq_{RN} \xi$ for two formulae $\psi$ and $\xi$, then, for any $x$, if $\xi(x) \neq \top$ then $\psi(x) \neq \top$.  

\noindent If $\varphi = d_{n+1}$, then, for any $x$ such that $d_{n+1}(x) \neq \top$ we have $d_n(x) \neq \top$ and $i_n(x) \neq \top$ (since $i_n \leq_{RN} d_{n+1}$ and $d_n \leq_{RN} d_{n+1}$); thus
\begin{align*}
    \iota(\varphi(x)) &= \iota(d_{n+1}(x)) = \iota(i_n(x) \vee d_n(x)) &\text{(definition of } d_{n+1})\\
    &= \iota(i_n(x)) \vee \iota(d_n(x)) &(\text{since } \iota \text{ is a lattice homomorphism)}\\
    &= i_n(\iota(x)) \vee d_n(\iota(x)) &\text{(by induction and since } d_{n+1}(x) \neq \top)\\
    &= d_{n+1}(\iota(x)) = \varphi(\iota(x)) &\text{(definition of } d_{n+1}).
\end{align*}
If $\varphi = i_{n+1}$, then, for any $x$ such that $i_{n+1}(x) \neq \top$, we have $d_n(x) \neq \top$ (since $d_n \leq_{RN} i_{n+1}$) and also $i_n(x) \neq \top$ (since otherwise \[i_{n+1}(x) = i_n(x) \to d_n(x) = \top \to d_n(x) = d_n(x),\] which is a case we already considered). Thus we have
\begin{align}
    ~ ~ ~ ~i_{n+1}(\iota(x)) &= i_n(\iota(x)) \to d_n(\iota(x)) &\text{(definition of } i_{n+1}) \label{eqn:i-case-1}\\
    &= \iota(i_n(x)) \to \iota(d_n(x)) &\text{(induction \& since } i_{n+1}(x) \neq \top).\label{eqn:i-case-2}
\end{align}
By the definition of implication, we have that
\begin{align*}
    \iota(i_n(x)) \to \iota(d_n(x)) &:= \max \SetComp{c \in H \oplus \top}{\iota(i_n(x)) \wedge c \leq \iota(d_n(x))},
\end{align*}
and $c$ must either be an element of $\iota(H)$ or $c = \top$. It cannot be that $c =\top$ since then $i_n(x) \leq_{H} d_n(x)$ which implies that
$
    \varphi(x) = i_{n+1}(x) = i_n(x) \to d_n(x) = \top
$
(which contradicts the assumption that $\varphi(x) \neq_{H} \top$). Thus $c$ must be an element of $\iota(H)$. In this case we have (by the definition of $i_{n+1}$) that 
$
    \iota(i_n(x)) \to \iota(d_n(x)) = \iota(i_n(x) \to d_n(x)) = \iota(i_{n+1}(x)).
$
Combining this with Equations (\ref{eqn:i-case-1}) and (\ref{eqn:i-case-2}) yields $\varphi(\iota(x)) = i_{n+1}(\iota(x)) = \iota(i_{n+1}(x)) = \iota(\varphi(x))$, as desired.
\end{proof}

\begin{lemma}\label{lemma:1-var-negative-direction}
Let $n \geq 2$ and $\varphi \in \{i_n, d_{n+1}\}$. We can construct a finite Heyting algebra $H$ such that $\ds_{H \oplus_{k} \top}(\varphi = \top) < 1$ for all $k \in \mathbb{N}$, but for which  we have $\lim_{k \rightarrow \infty} \ds_{H \oplus_{k} \top}(\varphi = \top) = 1$.
\begin{proof}
Proposition~\ref{prop:no-gap-1-var-easy} shows that $i_2 = \top$ has no gap while Corollary~\ref{cor:dneg-has-no-gap} does the same for the equation $i_3 = \top$, so from here on we can assume that $\varphi \geq_{\textsc{RN}} i_2$. Since propositional intuitionistic logic is complete with respect to finite Heyting algebras, and intuitionistic logic does not prove $\varphi$, we can find a finite Heyting algebra $H$ for which $\ds_{H}(\varphi = \top) \neq 1$. By Lemma~\ref{lemma:does-not-move} we know that $\ds_{H \oplus_k \top}(\varphi = \top) < 1$ for any $k \in \mathbb{N}$. But from Proposition~\ref{prop:no-gap-1-var-easy} and the fact that $\varphi \geq_{\textsc{RN}} i_2$ we also know that $\varphi(x) = \top$ is satisfied by at least $k$ elements of $H \oplus_k \top$. Thus we have that
$$ \ds_{H \oplus_k \top}(\varphi = \top) \geq \frac{k}{   \lvert   H  \lvert    + k}$$
which tends to $1$ since the right hand side goes to $1$ as $k \to \infty$.
\end{proof}
\end{lemma}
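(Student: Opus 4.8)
The plan is to realise the ``no finite gap'' phenomenon through the top-adjunction sequence $(H \oplus_k \top)_{k \in \mathbb{N}}$ flagged in Proposition~\ref{prop:no-gap-1-var-easy}, after first peeling off the two smallest formulae by hand. Proposition~\ref{prop:no-gap-1-var-easy} already disposes of $\varphi = i_2$ (that is, $\neg\neg p = \top$) and Corollary~\ref{cor:dneg-has-no-gap} disposes of $\varphi = i_3$ (double-negation elimination), so it remains to treat $\varphi = i_n$ with $n \geq 4$ and $\varphi = d_m$ with $m \geq 3$. Reading off the Rieger-Nishimura order of Figure~\ref{fig:RN_Lattice}, every such $\varphi$ lies above $i_2$, i.e. $i_2 \leq_{\textsc{RN}} d_3 \leq_{\textsc{RN}} \varphi$; in particular $\varphi \in \uparrow_{\textsc{RN}} \{d_2, i_2\}$, which is exactly the hypothesis needed to invoke Lemma~\ref{lemma:does-not-move}. (The law of excluded middle $d_2 = p \vee \neg p$ falls outside this range, consistently with its having a genuine gap by Theorem~\ref{thm:excluded-middle-gap}.)

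Next I would fix the base algebra. Because $\varphi$ sits strictly below $\top$ in the Rieger-Nishimura lattice, Theorem~\ref{thm:rieger-nishimura} tells us that $\varphi = \top$ is not a theorem of intuitionistic propositional logic; combined with completeness with respect to finite Heyting algebras, this produces a finite Heyting algebra $H'$ together with an element $x_0 \in H'$ for which $\varphi(x_0) \neq \top$. This already yields the ``strictly below $1$'' clause: applying Lemma~\ref{lemma:does-not-move} repeatedly along $H' \hookrightarrow H' \oplus \top \hookrightarrow H' \oplus_2 \top \hookrightarrow \cdots$, the successive images of $x_0$ remain violators, so $\ds_{H' \oplus_k \top}(\varphi = \top) < 1$ for every $k$.

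For the limit I would count, leaning on the negation computation in Proposition~\ref{prop:no-gap-1-var-easy}. Each of the $k$ elements adjoined to form $H' \oplus_k \top$ lies strictly above all of $H'$, so it negates to $\bot$ and hence has double negation $\top$; that is, $i_2$ evaluates to $\top$ there, and the recorded inequality $i_2 \leq_{\textsc{RN}} \varphi$ upgrades this to $\varphi = \top$. Thus at least $k$ of the $|H'| + k$ elements satisfy $\varphi = \top$, while the number of violators stays bounded by the constant $|H'|$ independently of $k$. To deliver both clauses with a single algebra as the statement requires, I would take $H = H' \oplus_{|H'|} \top$ and write $H \oplus_k \top = H' \oplus_{|H'| + k} \top$, giving
\[ \ds_{H \oplus_k \top}(\varphi = \top) \geq \frac{k}{2|H'| + k}, \]
whose right-hand side tends to $1$ as $k \to \infty$, while the left-hand side stays strictly below $1$ by the previous paragraph.

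The step I expect to be the real crux is the upgrade from $i_2 = \top$ to $\varphi = \top$ on the adjoined elements: the counting closes only because the denominator $2|H'| + k$ grows while the violator count is \emph{frozen} at what $H'$ contributes, and these two features come from different places. The freezing is precisely the content of Lemma~\ref{lemma:does-not-move} (violations are inherited only from $H'$ and never created by adjoining tops), whereas the growth of the satisfying set rests on promoting the elementary double-negation computation of Proposition~\ref{prop:no-gap-1-var-easy} along the lattice inequality $i_2 \leq_{\textsc{RN}} \varphi$. Pinning down that inequality for every relevant $\varphi$ (via $i_2 \leq_{\textsc{RN}} d_3$ together with $d_3 \leq_{\textsc{RN}} i_n$ for $n \geq 4$ and $d_3 \leq_{\textsc{RN}} d_m$ for $m \geq 3$) is routine but is the one place where the fine structure of the Rieger-Nishimura lattice is genuinely used.
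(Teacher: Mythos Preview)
Your proposal is correct and follows essentially the same approach as the paper: handle $i_2$ and $i_3$ separately via Proposition~\ref{prop:no-gap-1-var-easy} and Corollary~\ref{cor:dneg-has-no-gap}, then for the remaining $\varphi \geq_{\textsc{RN}} i_2$ use completeness to produce a finite $H'$ with a violator, freeze that violator under top-adjunction via Lemma~\ref{lemma:does-not-move}, count the adjoined elements as satisfiers through the inequality $i_2 \leq_{\textsc{RN}} \varphi$, and set $H = H' \oplus_{|H'|} \top$ to obtain the displayed bound $k/(2|H'|+k)$. Your write-up is in fact slightly more explicit than the paper's about which Rieger--Nishimura inequalities are being invoked and why Lemma~\ref{lemma:does-not-move} applies.
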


\begin{theorem}\label{thm:1-var-classification}
An equation $\varphi(p)$ in one free variable has finite satisfiability gap precisely if it is equivalent to one of the following: $p = \top$, $\neg p = \top$ or $p \vee \neg p = \top$.
\begin{proof}
Follows immediately from Proposition~\ref{prop:1-var-trivial}, Theorem~\ref{thm:excluded-middle-gap} and Lemma~\ref{lemma:1-var-negative-direction} by considering the Rieger-Nishimura lattice (Theorem~\ref{thm:rieger-nishimura}). 
\end{proof}
\end{theorem}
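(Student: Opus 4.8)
The plan is to exploit the structure of the free Heyting algebra on one generator so as to turn an a priori unbounded classification problem into a decision about each node of a single explicit lattice. First I would invoke Proposition~\ref{prop:eqn-base-form} to replace any finite system of one-variable equations by a single equation $\varphi(p) = \top$. Then, by Rieger's Theorem~\ref{thm:rieger-nishimura}, the free Heyting algebra on one generator is exactly the Rieger-Nishimura lattice, so every term $\varphi(p)$ is logically equivalent to $\top$, or to some $d_n$, or to some $i_n$. Since logically equivalent formulae have the same degree of satisfiability in every model, this reduces the entire classification to deciding, for each element of the Rieger-Nishimura lattice, whether the corresponding equation $\varphi = \top$ has finite satisfiability gap.

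Having made this reduction, I would split the work into a positive and a negative direction. For the positive direction, the equations $d_1 = p$, $i_1 = \neg p$ and $d_2 = p \vee \neg p$ are precisely the three nondegenerate candidates with finite gap (the constant terms $\bot$ and $\top$, in which $p$ does not occur, being trivial); this is exactly the content of Lemma~\ref{lemma:1-var-positive-direction}, which itself rests on the elementary counting of Proposition~\ref{prop:1-var-trivial} and on the inductive product argument of Theorem~\ref{thm:excluded-middle-gap}. For the negative direction, I would show that every remaining node — that is, $i_n$ for $n \geq 2$ and $d_m$ for $m \geq 3$ — yields an equation with no finite gap; this is Lemma~\ref{lemma:1-var-negative-direction}. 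Combining the two directions exhausts the lattice and closes the classification.

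The main obstacle is the negative direction, and specifically producing a single uniform witness for the whole infinite family of formulae lying above $i_2$. The strategy is to use the chain-adjoining operation $H \oplus_k \top$: completeness of intuitionistic propositional logic with respect to finite Heyting algebras supplies a base algebra $H'$ in which the equation fails, while Proposition~\ref{prop:no-gap-1-var-easy} shows that the proportion of satisfying elements of $H' \oplus_k \top$ tends to $1$ as $k \to \infty$. The delicate point — the one I expect to require the real work — is ruling out the possibility that $\ds_{H' \oplus_k \top}(\varphi = \top) = 1$ for every $k$, since in that case the sequence would witness nothing. This is exactly what Lemma~\ref{lemma:does-not-move} settles: its induction along the Rieger-Nishimura recursion guarantees that any element failing $\varphi = \top$ in $H'$ continues to fail it after adjoining top elements, so $\ds_{H' \oplus_k \top}(\varphi = \top) < 1$ persists for all $k$. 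With both the strict inequality and the limit in hand, the sequence $(H' \oplus_k \top)_k$ certifies the absence of a finite gap, and the theorem follows at once from Lemma~\ref{lemma:1-var-positive-direction} and Lemma~\ref{lemma:1-var-negative-direction}.
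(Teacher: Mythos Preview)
Your proposal is correct and follows exactly the paper's approach: reduce via Proposition~\ref{prop:eqn-base-form} and Theorem~\ref{thm:rieger-nishimura} to the nodes of the Rieger--Nishimura lattice, then invoke Lemma~\ref{lemma:1-var-positive-direction} for the three formulae with finite gap and Lemma~\ref{lemma:1-var-negative-direction} for the remainder. The paper's own proof is the one-line ``Immediate from Lemma~\ref{lemma:1-var-positive-direction} and Lemma~\ref{lemma:1-var-negative-direction}''; your write-up simply unpacks the same dependency chain (including the role of Lemma~\ref{lemma:does-not-move} inside the negative direction) in more detail.
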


\begin{figure}
\centering
\includegraphics[width=0.5\textwidth]{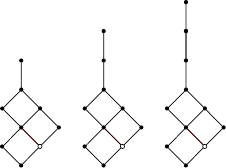} $\dots$
\caption{A sequence of Heyting algebras on which $d_5 = \top$ has no finite gap. The sole element that fails to satisfy $d_5 = \top$ is highlighted in white. Similar families can be constructed for all $\varphi \geq_\mathbf{RN} i_2$, while $i_3$ requires a different technique.}
\label{fig:heyting}
\end{figure}

\section{Classical principles in two free variables}\label{sec:2-var}

\begin{body}
The structure of free Heyting algebras in more than one generator is poorly understood: in particular, the Rieger-Nishimura theorem has no known analogue for such algebras. This prevents us from extending the methods of Section~\ref{sec:1-var} towards a complete classification of two-variable equations with respect to satisfiability gap. However, a celebrated proof-theoretic result of Pitts (Theorem~\ref{thm:pitts-quantifiers}), combined with a result of Yankov (Theorem~\ref{thm:yankov-substitution}) allows us to determine the degrees of satisfiability of many classical principles. In particular, we obtain that none of the two-variable principles listed in Example~\ref{example:classical-principles} have finite gap.
\end{body}

\begin{proposition}\label{prop:universal-reduct}
Take a theory $T$ over a first-order language $\mathcal{L}$, and a formula $\varphi(x_1,\dots,x_n,y)$ in $\mathcal{L}$. If $\varphi(x_1,\dots,x_n,y)$ has finite satisfiability gap in $T$, then so does $\forall y. \varphi(x_1,\dots,x_n,y)$.
\begin{proof}
Assume that $\varphi(x_1,\dots,x_n,y)$ has satisfiability gap $\varepsilon$. Assume that $\ds_H(\forall y. \varphi(x_1,\dots,x_n,y)) > 1 - \varepsilon$ in some finite model $H$ of $T$. We lower bound the probability that elements $w_1,\dots,w_n$ and $z$ chosen uniformly randomly from $H$ satisfy $\varphi(w_1,\dots,w_n,z)$. Choose $w_1,\dots,w_n$ and $z$ uniformly randomly from $H$. With probability exceeding $1 - \varepsilon$, the chosen $w_1,\dots,w_n$ satisfy the formula $\forall y. \varphi(w_1,\dots,w_n,y)$, and thus $w_1,\dots,w_n, z$ definitely satisfy $\varphi(w_1,\dots,w_n,z)$. But this means that $\ds_H(\varphi(x_1,\dots,x_n,y)) > 1 - \varepsilon$, and hence $\ds_H(\varphi(x_1,\dots,x_n,y)) = 1$.
\end{proof}
\end{proposition}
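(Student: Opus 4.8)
The plan is to show that the very same constant $\varepsilon$ witnessing the gap of $\varphi$ also witnesses a gap for $\forall y.\,\varphi$. The heart of the matter is a monotonicity inequality relating the two degrees of satisfiability, after which the gap hypothesis does all the remaining work.

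First I would establish that $\ds_M(\forall y.\,\varphi(x_1,\dots,x_n,y)) \leq \ds_M(\varphi(x_1,\dots,x_n,y))$ for every finite model $M$ of the theory. This follows from a direct counting argument: writing $B \subseteq M^n$ for the set of $n$-tuples satisfying the universal formula and $A \subseteq M^{n+1}$ for the set of $(n+1)$-tuples satisfying $\varphi$, every tuple $(a_1,\dots,a_n) \in B$ contributes $|M|$ distinct tuples to $A$, namely $(a_1,\dots,a_n,b)$ for each $b \in M$, so $|A| \geq |M|\cdot|B|$; dividing through by $|M|^{n+1}$ yields the inequality. Equivalently one may phrase this probabilistically: if uniformly random $w_1,\dots,w_n$ satisfy $\forall y.\,\varphi$, then for an independently chosen $z$ the tuple $w_1,\dots,w_n,z$ automatically satisfies $\varphi$.

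Next, assuming $\varphi$ has gap $\varepsilon$, I would take an arbitrary finite model $M$ and suppose $\ds_M(\forall y.\,\varphi) > 1 - \varepsilon$. The inequality above forces $\ds_M(\varphi) > 1 - \varepsilon$ as well, and the gap hypothesis on $\varphi$ then collapses this to $\ds_M(\varphi) = 1$. Finally I would observe that $\ds_M(\varphi) = 1$ means every $(n+1)$-tuple satisfies $\varphi$, so in particular every $n$-tuple satisfies $\forall y.\,\varphi$, giving $\ds_M(\forall y.\,\varphi) = 1$. Hence for every finite model we land in exactly one of the two cases of Definition~\ref{def:finite-gap} with the same $\varepsilon$, which is precisely the claim.

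I do not anticipate a genuine obstacle here; the argument is short and the inequality is the \emph{easy} direction, since universal quantification can only shrink the satisfying fraction. The one point requiring care is the logical structure of the gap condition: one cannot directly bound $\ds_M(\forall y.\,\varphi)$ below $1-\varepsilon$, but must instead invoke the dichotomy to rule out the intermediate region $(1-\varepsilon,\,1)$, and then separately check that the extreme value $1$ transfers back from $\varphi$ to its universal reduct.
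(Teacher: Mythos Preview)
Your proposal is correct and is essentially the same argument as the paper's: both obtain the inequality $\ds_M(\forall y.\,\varphi)\le\ds_M(\varphi)$ (you via counting, the paper via the equivalent probabilistic phrasing) and then apply the gap dichotomy for $\varphi$. You are slightly more explicit than the paper in spelling out the final step that $\ds_M(\varphi)=1$ forces $\ds_M(\forall y.\,\varphi)=1$, which the paper leaves implicit.
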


\begin{body}
Given an equation $\varphi(x,y)$ in two free variables, $\forall y. \varphi(x,y)$ has one free variable. One could hope to use the contrapositive of Proposition~\ref{prop:universal-reduct} to obtain non-existence of a finite satisfiability gap for a large class of equations in two variables, by reducing them to equations in one variable, which we have fully classified in Section~\ref{sec:1-var}. Unfortunately, as we see in Example~\ref{example:univ-not-equation}, in the general case, the set defined by $\forall y. \varphi(x,y)$ need not have an equational definition.
\end{body}

\begin{example}\label{example:univ-not-equation}
The set defined by the formula $\forall y. y \vee (y \rightarrow x) = \top$ need not coincide with any set defined by an equation in the language of Heyting algebras.
\begin{proof}
Consider the 4-chain as a Heyting algebra. The set $S$ defined by the formula $\forall y. y \vee (y \rightarrow x) = \top$ has two elements, and does not contain $\bot$. Since the set defined by $\neg \neg x = \top$ has 3 elements, and the set defined by $\neg x = \top$ contains $\bot$, we have that $S$ does not contain either of these sets. Using the Rieger-Nishimura theorem, we get that $S$ would have to coincide with the set defined by $x = \top$. But the latter has only one element. 
\end{proof}
\end{example}

\begin{body}
In some cases, a celebrated result of A.~M.~Pitts allows us to work around the difficulty posed by Example~\ref{example:univ-not-equation}, essentially by internalizing second-order propositional quantification in the intuitionisitic propositional calculus.
\end{body}

\begin{theorem}[Pitts \cite{pitts-quantifiers}]\label{thm:pitts-quantifiers}
Take a finite sequence of propositional variables $\overline{x}$, and a propositional variable $y$ not contained in $\overline{x}$. Let $\Phi(\overline{x}, y)$ denote a formula of intuitionistic propositional calculus containing only the variables in $\overline{x}, y$. Then we can find a propositional formula $\pita y.\Phi(\overline{x}, y)$ so that the following all hold:
\begin{enumerate}
    \item The formula $\pita y.\Phi(\overline{x}, y)$ contains only the variables in $\overline{x}$.
    \item For any propositional formula $\Psi(\overline{x})$, intuitionistic logic proves the implication $\Psi(\overline{x}) \rightarrow \pita y.\Phi(\overline{x}, y)$ precisely if it proves $\Psi(\overline{x}) \rightarrow \Phi(\overline{x}, y)$.
    \item Given any propositional formula $\Psi$, intuitionistic logic proves all implications $\pita y.\Phi(\overline{x}, y) \rightarrow \Phi(\overline{x}, \Psi)$, where $\Phi(\overline{x}, \Psi)$ denotes the formula obtained by substituting the formula $\Psi$ for the propositional variable $y$ everywhere in $\Phi$.
\end{enumerate}
\end{theorem}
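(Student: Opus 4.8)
The plan is to prove this by proof theory rather than by the algebraic semantics used elsewhere in the paper, following Pitts' original strategy. The essential ingredient is a \emph{terminating} sequent calculus for intuitionistic propositional logic, namely Dyckhoff's contraction-free calculus $\mathbf{G4ip}$, in which the left-implication rule is split into several cases according to the syntactic shape of the antecedent of the principal implication. The point of this reformulation is that backward proof search terminates: there is a well-founded multiset ordering on formula-weights that strictly decreases from conclusion to premises of every rule, so cut-free provability can be analyzed by induction over that ordering with no loop-checking. First I would recall this calculus together with its soundness and completeness relative to $\vdash$, and isolate the termination measure.

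With the terminating calculus in hand, I would define, by simultaneous recursion on the termination measure, \emph{two} operations: the universal interpolant $\pita Y.\Phi$ and its existential dual (call it $\exists Y.\Phi$). Both are needed at once, because eliminating $Y$ from an implication $A \to B$ forces one to push the quantifier covariantly into $B$ and contravariantly into $A$, swapping $\pita$ for $\exists$ on the antecedent. The definition proceeds rule-by-rule over $\mathbf{G4ip}$: for each way the principal formula could be decomposed by a last inference, one records a corresponding $Y$-free conjunct (for $\pita$) or disjunct (for $\exists$), and termination of the calculus is exactly what guarantees that this mutual recursion bottoms out. Intuitively, $\pita Y.\Phi$ is being built as the weakest $Y$-free formula that still entails $\Phi$, equivalently the greatest $Y$-free lower bound of all substitution instances $\Phi(\overline{X},\Psi)$.

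Property (1) then holds by construction, since the recursion removes every occurrence of $Y$. Property (3), and the forward direction of property (2), are comparatively direct: (3) is established by induction showing that $\pita Y.\Phi$ entails each instance $\Phi(\overline{X},\Psi)$, and the forward direction of (2) follows immediately, since $\vdash \Psi(\overline{X}) \to \pita Y.\Phi$ together with the instance $\vdash \pita Y.\Phi \to \Phi$ (taking $\Psi := Y$ in (3)) yields $\vdash \Psi(\overline{X}) \to \Phi$ by transitivity. The reverse direction of (2) --- that every $Y$-free formula entailing $\Phi$ already entails the interpolant --- is the crux, and I expect it to be the main obstacle. It is a completeness/covering statement proved by induction on the height of a $\mathbf{G4ip}$ derivation of the sequent encoding $\Psi(\overline{X}) \Rightarrow \Phi$, with a delicate case analysis in which each last rule of the derivation must be matched against the corresponding clause in the recursive definition of $\pita$ (and of $\exists$). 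Making the inductive invariants line up across the mutual recursion is the genuinely technical heart of the argument.

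A fully semantic alternative is available and gives cleaner intuition: one can realize $\pita Y$ as a bisimulation quantifier over finite Kripke models, or equivalently invoke the Ghilardi--Zawadowski theorem that the variety of Heyting algebras admits a model completion, which is equivalent to uniform interpolation. That route trades the sequent-calculus bookkeeping for the requisite model theory of Heyting algebras; since the present paper already works with finite Heyting semantics, it may in fact be the more economical path, but the completeness of the bisimulation construction remains the analogue of the same main obstacle.
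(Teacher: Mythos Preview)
The paper does not prove this statement at all: it is quoted verbatim as Pitts' theorem, with a citation to \cite{pitts-quantifiers}, and then used as a black box in the proof of Proposition~\ref{prop:pitts-reduct}. There is therefore nothing in the paper to compare your proposal against.

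That said, your outline is an accurate summary of Pitts' original argument. The key ingredients you name --- Dyckhoff's terminating calculus $\mathbf{G4ip}$, the simultaneous recursive definition of the universal and existential interpolants over the termination measure, and the main induction establishing the completeness direction of property (2) --- are exactly the components of the proof in \cite{pitts-quantifiers}. Your identification of the reverse direction of (2) as the technical crux is also correct. The semantic alternative via bisimulation quantifiers or the Ghilardi--Zawadowski model-completion result is likewise a legitimate route, though it postdates Pitts and is not what the citation points to. For the purposes of the present paper, however, none of this machinery is needed: the authors only invoke the bare statement, and indeed only property (3) and the instance $\Psi = Y$ are actually used downstream.
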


\begin{body}
Keep in mind that the Pitts quantifier, $\pita$~, ~ assigns formulae of intuitionistic logic to formulae of intuitionistic logic, not first-order formulae in the language of Heyting algebras to other such formulae. Before we use them, we require an alternative to Proposition~\ref{prop:universal-reduct} which allows us to replace the universally quantified first-order formula with a Pitts quantified equation. For classical principles, Proposition~\ref{prop:pitts-reduct} constitutes one such result.
\end{body}

\begin{theorem}[Yankov \cite{yankov-classical-principles}]\label{thm:yankov-substitution}
Take a classical principle $f(x_1,\dots,x_n) = \top$ in $n$ free variables, and choose a variable symbol $p$ distinct from each of the variables $x_1,\dots,x_n$. We can find terms $y_1, \dots, y_n \in \{ \top, p, \bot \}$ so that the universal closure of the inequality $f(y_1,\dots,y_n) \leq \neg\neg p \rightarrow p$ holds in every Heyting algebra
\end{theorem}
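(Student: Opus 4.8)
The plan is to reduce the claim to a statement about the free Heyting algebra on one generator. By the Rieger-Nishimura theorem (Theorem~\ref{thm:rieger-nishimura}), the universal closure of the inequality $f(y_1,\dots,y_n) \leq \neg\neg p \rightarrow p$ holds in every Heyting algebra precisely when it holds in the Rieger-Nishimura lattice $\mathrm{RN}$, where (as already observed) $\neg\neg p \to p$ is the element $i_3$. So it suffices to exhibit a substitution $y_i \in \{\top, p, \bot\}$ for which $f(y_1,\dots,y_n)$, read as an element of $\mathrm{RN}$, lands in the principal downset $\downarrow i_3$.

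The engine of the argument is the three-element chain $\mathbf{3} = \{\bot < m < \top\}$, viewed as a Heyting algebra, together with the unique Heyting homomorphism $h \colon \mathrm{RN} \to \mathbf{3}$ sending the generator $p$ to $m$ (which exists and is unique, again by freeness). First I would prove the following characterization of the downset of $i_3$: for every $x \in \mathrm{RN}$ one has $x \leq i_3$ if and only if $h(x) \neq \top$. One direction is immediate from monotonicity, since $h(i_3) = (\neg\neg m \to m) = (\top \to m) = m$. For the converse I would compute $h$ on the remaining elements of $\mathrm{RN}$: using $h(i_2) = \neg\neg m = \top$ together with the recursions $d_{n+1} = i_n \vee d_n$ and $i_{n+1} = i_n \to d_n$, one checks that $h$ sends every element of $\mathrm{RN}$ outside $\{\bot, i_1, d_1, d_2, i_3\}$ to $\top$, while it sends each of those five elements into $\{\bot, m\}$. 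Since $\downarrow i_3 = \{\bot, i_1, d_1, d_2, i_3\}$, this yields exactly the claimed equivalence.

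With this lemma in hand the substitution essentially constructs itself. Because $f = \top$ is a classical principle and $\mathbf{3}$ is not Boolean, $f = \top$ fails in $\mathbf{3}$: there is an assignment $\alpha$ of the variables $x_1, \dots, x_n$ to elements of $\{\bot, m, \top\}$ with $f(\alpha(x_1), \dots, \alpha(x_n)) \neq \top$. I would then set $y_i = \top$, $y_i = p$, or $y_i = \bot$ according to whether $\alpha(x_i)$ equals $\top$, $m$, or $\bot$, so that $h(y_i) = \alpha(x_i)$ for each $i$. Since $h$ preserves all the Heyting operations, $h(f(y_1,\dots,y_n)) = f(\alpha(x_1),\dots,\alpha(x_n)) \neq \top$, and the characterization lemma gives $f(y_1,\dots,y_n) \leq i_3$ in $\mathrm{RN}$, which by freeness is the desired inequality.

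I expect the characterization lemma to be the main obstacle: it rests on having an explicit enough grip on the order structure of $\mathrm{RN}$ near $i_3$ to be certain that $\downarrow i_3$ is exactly $\{\bot, i_1, d_1, d_2, i_3\}$ and that every other element is collapsed to $\top$ by $p \mapsto m$. As a sanity check on the construction, the failing assignment $\alpha$ must take the value $m$ somewhere, since when restricted to $\{\bot,\top\}$ the map $f$ agrees with its evaluation in the two-element Boolean algebra, where the classical principle necessarily holds; this guarantees that at least one $y_i$ is genuinely the variable $p$ rather than a constant.
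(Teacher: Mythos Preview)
The paper does not prove this theorem; it is quoted from Yankov \cite{yankov-classical-principles} and used as a black box in the proof of Proposition~\ref{prop:pitts-reduct}. So there is no ``paper's own proof'' to compare against.

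Your argument, however, is correct and self-contained. The reduction to the Rieger--Nishimura lattice via freeness is the right move, and the characterization lemma---that $x \leq i_3$ in $\mathrm{RN}$ iff the homomorphism $h\colon \mathrm{RN} \to \mathbf{3}$ sending $p \mapsto m$ does not collapse $x$ to $\top$---checks out: one computes $h(\bot)=\bot$, $h(i_1)=\bot$, $h(d_1)=m$, $h(d_2)=m$, $h(i_3)=m$, while $h(i_2)=\top$, $h(d_3)=\top$, $h(i_4)=h(d_4)=\top$, and thereafter the recursions keep both sequences at $\top$. This matches $\downarrow i_3 = \{\bot, i_1, d_1, d_2, i_3\}$ exactly. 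The remainder (choosing a failing valuation in $\mathbf{3}$ and lifting it to a substitution) is routine and correct. Your closing sanity check is also right: the subalgebra $\{\bot,\top\} \subseteq \mathbf{3}$ is Boolean, so a failing valuation must hit $m$, guaranteeing that at least one $y_i$ is the variable $p$.

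One minor stylistic point: in the last paragraph you flag the characterization lemma as the main obstacle, but your sketch already contains a complete proof of it. I would promote that computation from a parenthetical remark to the body of the argument and drop the hedging.
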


\begin{proposition}\label{prop:pitts-reduct}
Take a classical principle $f(x,y) = \top$ in the language of Heyting algebras. If $f(x,y) = \top$ has finite satisfiability gap, then so does $\pita y. f(x,y) = \top$.
\begin{proof}
Substituting $\pita y. f(x,y)$ for $\Phi(\overline{x})$ in Theorem~\ref{thm:pitts-quantifiers} yields that intuitionistic logic proves the implication $(\pita y. f(x,y)) \rightarrow f(x,y)$. Passing through the Heyting semantics, we get that the universal closure of the inequality $\pita y. f(x,y) \leq f(x,y)$ holds in every Heyting algebra.

Now assume that the equation $f(x,y) = \top$ has finite satisfiability gap $\varepsilon$ in the class of Heyting algebras. Moreover, assume that $\ds_H(\pita y. f(x,y) = \top) > 1 - \varepsilon$ in some Heyting algebra $H$. Choose elements $a$ and $b$ uniformly randomly from $H$. With probability exceeding $1 - \varepsilon$, the chosen $a$ satisfies $\pita y. f(a,y) = \top$, and thus, by the previously derived inequality, also satisfies $f(a,b) = \top$ for all $b \in H$. But this means that $\ds_H(f(x,y) = \top) > 1 - \varepsilon$, and hence, by the finite satisfiability gap assumption, $\ds_H(f(x,y) = \top) = 1$. We conclude that $H$ is a Boolean algebra. We know from Theorem~\ref{thm:yankov-substitution} that $\pita y. f(x,y) = \top$ is equivalent to something in the downset of $i_3$, so it is either a classical principle (and thus its universal closure holds in $H$), or else equivalent to one of $\{x = \top, \neg x = \top, \bot = \top \}$, and by Proposition~\ref{prop:1-var-trivial} holds with probability no larger than $\frac{1}{2}$ in $H$.
\end{proof}
\end{proposition}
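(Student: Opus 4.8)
The plan is to treat the Pitts quantifier $\pita y$ as an internalized universal quantifier and to mimic the reduction behind Proposition~\ref{prop:universal-reduct}, while staying inside the purely equational world so that the one-variable classification of Section~\ref{sec:1-var} stays available. The first move is to extract a semantic inequality from Theorem~\ref{thm:pitts-quantifiers}: the substitution instance $\Psi := y$ in clause~(3) shows that intuitionistic logic proves $\pita y. f(x,y) \rightarrow f(x,y)$, so passing through the Heyting semantics the universal closure of $\pita y. f(x,y) \leq f(x,y)$ holds in every Heyting algebra. Abbreviating the one-variable term by $g(x) := \pita y. f(x,y)$, this inequality is the workhorse of the argument: whenever $g(a) = \top$, we are forced to have $f(a,b) = \top$ for every $b$.

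Next I would use a counting argument to turn a large satisfying set for $g = \top$ into Booleanness of the algebra. Assume $f(x,y) = \top$ has finite satisfiability gap $\varepsilon$; since every smaller positive constant is also a gap, we may take $\varepsilon \leq \tfrac{1}{2}$. Suppose, for some finite Heyting algebra $H$, that $\ds_H(g = \top) > 1 - \varepsilon$. By the inequality above each $a$ in the satisfying set of $g = \top$ contributes a full ``row'' to the satisfying set of $f = \top$, so $\ds_H(f(x,y) = \top) \geq \ds_H(g = \top) > 1 - \varepsilon$. The gap hypothesis forces $\ds_H(f(x,y) = \top) = 1$, and since $f = \top$ is a classical principle, $H$ must be a Boolean algebra.

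It then remains to compute $\ds_H(g = \top)$ for this Boolean $H$, and this is where Theorem~\ref{thm:yankov-substitution} enters. Yankov supplies terms $y_1, y_2 \in \{\top, p, \bot\}$ (with $p$ a fresh variable) satisfying $f(y_1, y_2) \leq \neg\neg p \rightarrow p$, and substituting $y_2$ for $y$ in clause~(3) of Theorem~\ref{thm:pitts-quantifiers} while instantiating the free variable by $y_1$ gives $g(y_1) \leq f(y_1, y_2) \leq \neg\neg p \rightarrow p$. In the principal case, where $y_1$ is the variable $p$ itself, this reads (after renaming) $g \leq \neg\neg x \rightarrow x = i_3$. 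By the Rieger-Nishimura theorem the only lattice elements below $i_3$ are $i_0, i_1, d_1, d_2, i_3$, so $g = \top$ is either a classical principle ($g = d_2$ gives the law of excluded middle, $g = i_3$ double-negation elimination) or one of $x = \top$, $\neg x = \top$, $\bot = \top$. In the classical case the principle holds throughout the Boolean algebra $H$, giving $\ds_H(g = \top) = 1$; in the trivial case Proposition~\ref{prop:1-var-trivial} bounds $\ds_H(g = \top)$ by $\tfrac{1}{2} < 1 - \varepsilon$, contradicting our assumption. Either way $\ds_H(g = \top) = 1$, which is exactly the finite gap we want.

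The step I expect to fight with is the qualifier ``where $y_1$ is the variable $p$''. Theorem~\ref{thm:yankov-substitution} also permits $y_1 \in \{\top, \bot\}$, and then Yankov constrains $f$ with a constant sitting in the very slot that the Pitts quantifier leaves free, so the inequality only bounds $g$ at a constant argument. One can discard the fully-constant case (there $\{\bot, \top\}$ is a Boolean subalgebra on which the principle $f$ evaluates to $\top$, contradicting $f(y_1,y_2) \leq \neg\neg p \rightarrow p$), but the mixed case $y_1 = \bot$, $y_2 = p$ can leave $g = i_2 = \neg\neg x$, which is neither a classical principle nor one of the three trivial equations; the naive Yankov reading breaks here. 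The clean fix is to lean harder on the Booleanness already secured in the second step: over a Boolean algebra every one-variable Heyting term collapses to one of $\bot$, $x$, $\neg x$, $\top$ (in particular $\neg\neg x$ collapses to $x$), so $\ds_H(g = \top) \in \{0,\ 1/|H|,\ 1\}$ is automatically either at most $\tfrac{1}{2}$ or equal to $1$; combined with $\ds_H(g = \top) > 1 - \varepsilon \geq \tfrac{1}{2}$ this forces $\ds_H(g = \top) = 1$ directly, bypassing the case analysis entirely.
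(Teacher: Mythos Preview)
Your argument follows the same skeleton as the paper's: use Pitts to get the universal inequality $g(x) := \pita y.f(x,y) \leq f(x,y)$, push a high degree of satisfiability of $g=\top$ up to $f=\top$, invoke the gap of $f$ to force $H$ Boolean, and then finish by analysing $g=\top$ on a Boolean algebra. So the overall route is essentially the paper's.

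Where your write-up is genuinely sharper is the last step. The paper asserts, citing Yankov, that $g=\top$ is ``either a classical principle\dots or else equivalent to one of $\{x=\top,\ \neg x=\top,\ \bot=\top\}$''. You correctly flag that the Yankov substitution need not land in the $x$-slot; when $y_1\in\{\top,\bot\}$ the inequality only constrains $g$ at a constant argument and does not by itself rule out $g\equiv i_2=\neg\neg x$, which is none of the listed cases as a Heyting identity. The paper's sentence is salvageable only if ``equivalent'' is read as ``equivalent over Boolean algebras'', since $\neg\neg x$ collapses to $x$ there. Your fix makes exactly this move explicit and, better, shows that it renders Yankov superfluous: over a Boolean algebra every one-variable term reduces to one of $\bot,x,\neg x,\top$, so $\ds_H(g=\top)\in\{0,\,1/|H|,\,1\}$, and combined with $\ds_H(g=\top)>1-\varepsilon\geq\tfrac12$ this forces $\ds_H(g=\top)=1$ directly. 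That is both cleaner and more robust than the paper's appeal to Theorem~\ref{thm:yankov-substitution}. The minor normalisation $\varepsilon\leq\tfrac12$ is harmless and used exactly where needed.
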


\begin{body}
Notice that while one could extend Proposition~\ref{prop:pitts-reduct} to multiple variables, the assumption that $f(x,y) = \top$ is a classical principle cannot be eliminated from the proof in any straightforward way. For example, one can find Heyting algebras where the equation $x \vee (x \rightarrow y \vee \neg y) = \top$ holds universally, but $\pita x. x \vee (x \rightarrow y \vee \neg y) = \top$ does not.
\end{body}

\begin{corollary}\label{cor:2-var-simple}
The following two-variable equations do not have finite satisfiability gap:
\begin{itemize}
    \item $((x \rightarrow y) \rightarrow x) \rightarrow x = \top$,
    \item $(\neg x \rightarrow \neg y) \rightarrow (y \rightarrow x) = \top$,
    \item $(\neg x \rightarrow y) \rightarrow (x \rightarrow y) \rightarrow y = \top$.
\end{itemize}
\begin{proof}
Observe that all of these equations have $\neg\neg x \rightarrow x = \top$ either as their universal closure or as the Pitts closure of their left-hand side (with respect to $y$), then apply Theorem~\ref{thm:1-var-classification} along with the contrapositive forms of Propositions~\ref{prop:universal-reduct}~and~\ref{prop:pitts-reduct}.
\end{proof}
\end{corollary}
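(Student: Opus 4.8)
The plan is to eliminate the variable $y$ and reduce each principle to a one-variable formula already classified by Theorem~\ref{thm:1-var-classification}. For a classical principle $f(x,y)=\top$ I would look at the universal closure $\forall y.\,f(x,y)$ and apply the contrapositive of Proposition~\ref{prop:universal-reduct}: if this one-variable formula has no finite satisfiability gap, then neither does $f(x,y)=\top$. (Equivalently, one may quantify out $y$ with the Pitts quantifier and use Proposition~\ref{prop:pitts-reduct}, which applies precisely because each $f=\top$ is a classical principle.) The whole argument then hinges on computing, in each case, the set that the quantified formula defines.

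For Peirce's law and the contrapositive principle the computation lands on double-negation elimination. In each case I would show that $\forall y.\,f(x,y)$ defines exactly the set $\doubleneg_H$ of regular elements in every Heyting algebra $H$. One inclusion is a two-line intuitionistic derivation showing that $\neg\neg x\to x$ parametrically proves the principle, so every regular $x$ satisfies $f(x,b)=\top$ for all $b$; the reverse inclusion comes from a single substitution ($y=\bot$ for Peirce, giving $(\neg x\to x)\to x\equiv\neg\neg x\to x$, and $y=\top$ for the contrapositive, giving $\neg\neg x\to x$), which forces $x$ to be regular. Since $\doubleneg_H$ is the set defined by $\neg\neg x=x$, which has no finite gap by Corollary~\ref{cor:dneg-has-no-gap}, the contrapositive of Proposition~\ref{prop:universal-reduct} disposes of these two principles.

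The excluded-middle eliminator $f(x,y)=(\neg x\to y)\to((x\to y)\to y)$ is where I expect the real difficulty, because the reduction degenerates. Rewriting $f(x,y)=((x\vee\neg x)\to y)\to y$, modus ponens gives $x\vee\neg x\leq f(x,y)$ for all $y$, while substituting $y=x\vee\neg x$ gives $f(x,x\vee\neg x)=x\vee\neg x$; hence $\forall y.\,f(x,y)$ (and likewise the Pitts closure) defines only $\exclmid_H$, the set of elements satisfying the law of excluded middle, which \emph{does} have a finite gap by Theorem~\ref{thm:excluded-middle-gap}. So the one-variable reduction yields nothing here, and this case must be handled by an explicit family. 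I would reuse the algebras $B_n\oplus\top$ from Corollary~\ref{cor:dneg-has-no-gap}: a short case analysis shows that $f(a,b)=\top$ fails in $B_n\oplus\top$ exactly when $a$ is a nonzero element of the Boolean part and $b$ is its old top $1_B$, so $\ds_{B_n\oplus\top}(f=\top)$ stays strictly below $1$ yet tends to $1$ as $n\to\infty$, giving no finite gap. The crux, and the thing worth double-checking, is that the parametric implication $(\neg\neg x\to x)\to f(x,y)$ which drove the first two cases genuinely fails for the eliminator — already in the five-element algebra $B_2\oplus\top$, with $x$ a Boolean atom and $y=1_B$ — which is exactly why this principle escapes the uniform reduction.
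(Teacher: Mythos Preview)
Your argument is correct. For Peirce's law and the contrapositive principle it coincides with the paper's: eliminate $y$ and land on $\neg\neg x\to x$. For the LEM eliminator you take a different route. You eliminate $y$, correctly find that both $\forall y.\,f(x,y)$ and $\pita y.\,f(x,y)$ reduce to $x\vee\neg x$ (which has a gap, so the reduction yields nothing), and then build the explicit family $B_n\oplus\top$; your failure count is right and the construction works. The paper instead eliminates the \emph{other} variable: substituting $x:=y$ into $f(x,y)=((x\vee\neg x)\to y)\to y$ gives $\neg\neg y\to y$, and conversely $\neg\neg y\to y$ intuitionistically entails $f(x,y)$ for every $x$, so $\forall x.\,f(x,y)$ (equivalently $\pita x.\,f(x,y)$) defines exactly $\doubleneg_H$, and the contrapositive of Proposition~\ref{prop:universal-reduct} (or~\ref{prop:pitts-reduct}) then handles all three principles uniformly. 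Your check that $(\neg\neg x\to x)\to f(x,y)$ fails in $B_2\oplus\top$ is correct, but $(\neg\neg y\to y)\to f(x,y)$ does hold; the roles of $x$ and $y$ in the eliminator are asymmetric, and that is the one thing you overlooked. The paper's route buys uniformity and brevity; yours makes the witnessing family explicit and shows that the same algebras from Corollary~\ref{cor:dneg-has-no-gap} already do the job.
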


\begin{body}
Corollary \ref{cor:2-var-simple}, along with Theorem~\ref{thm:1-var-classification}, settle the finite gap question for most of the commonly considered classical principles enumerated in Example~\ref{example:classical-principles}, with the sole exception of material implication, $(x \rightarrow y) \rightarrow \neg x \vee y = \top$. We shall prove that, although taking its universal closures or applying Pitts quantifiers always yields the equation $x \vee \neg x = \top$, the principle of material implication nevertheless does not have finite gap (Theorem~\ref{thm:no-gap-for-material-implication}). Consequently, the implications in Propositions \ref{prop:universal-reduct} and \ref{prop:pitts-reduct} cannot be reversed, not even in the context of classical principles.
\end{body}

\begin{definition}
The \emph{materializer} of an element $x$ in a Heyting algebra~$H$ is the set $M_H(x) = \SetComp{y \in H}{x \to y = \neg x \lor y}$.
\end{definition}

\begin{body}
We remark that materializers display interesting algebraic structure which perhaps warrants further investigation in the future. Indeed, it can be shown that they contain $\bot, \top$, are closed under $\wedge$, and absorb all right implications. Moreover, whenever we have $M_H(a) = M_H(b)$, we also have $(a \rightarrow b) \in \exclmid_H$. Accordingly, in an algebra with trivial center, all materializers (apart from those of $\bot, \top$) are unique. While we found these properties useful in other developments, none of them play any direct role in what follows; thus, to avoid unnecessary tangents, we shall not include their proofs here.
\end{body}

\begin{theorem}\label{thm:no-gap-for-material-implication}
There exists a sequence $(H_n)_{n \in \mathbb{N}}$ of Heyting algebras such that 
$$1 > \ds_{H_n}(a \to b=\neg a \lor b) \geq 1 - \frac{3^{n+1}}{(2^n + 1)^2}.$$
\begin{proof}
Take the Boolean algebra $B := (2^{[n]}, \cap, \cup, \to, \emptyset, [n])$ and let $(H, \land, \lor, \Rightarrow, \emptyset, \top)$ denote the Heyting algebra formed by adjoining a new top element $\top$ to $B$. Take any $x$ and $y$ in $2^{[n]}$ and notice that, since $x \leq y$ if and only if $x \Rightarrow y = \top$, then it must be that, if 
$x \Rightarrow y \neq \top$, then
\[x \Rightarrow y = \bigvee \SetComp{r \in H}{x \land r \leq y} = \bigvee \SetComp{r \in B}{x \land r \leq y} = x \rightarrow y.\]
This implies that, for any $x \in H$, we can lower-bound the cardinality of the materializer $M_H(x)$ of $x$ as $   \lvert   M_H(x)   \lvert     \geq    \lvert   H   \lvert    -    \lvert   \uparrow x   \lvert   $. Consequently we have
\begin{align*}
    \sum_{x \in H}   \lvert   M_H(x)   \lvert    
    &\geq    \lvert   H   \lvert   ^2 - \sum_{x \in H}    \lvert   \uparrow_H x   \lvert    &\text{(since }    \lvert   M_H(x)   \lvert    \geq    \lvert   H   \lvert    -    \lvert   \uparrow_H x   \lvert   )\\
    &=    \lvert   H   \lvert   ^2 - \Bigl (1 + \sum_{x \in B}    \lvert   \uparrow_H x   \lvert    \Bigr) &\text{(since }    \lvert   \uparrow_H \top   \lvert    = 1)\\
    &=    \lvert   H   \lvert   ^2 - \Bigl (1 + \sum_{x \in B} (1 +    \lvert   \uparrow_B x   \lvert   ) \Bigr) &\text{(since } \forall x \in H, \{\top\} = \uparrow_H x \setminus \uparrow_B x)\\
    &=    \lvert   H   \lvert   ^2 - \Bigl (1 +    \lvert   B   \lvert    + \sum_{i=0}^n \binom{n}{i}2^{n - i} \Bigr) &\text{(since } \uparrow_B x = \SetComp{s \subseteq [n]}{x \subseteq s})\\
    &=    \lvert   H   \lvert   ^2 - (1 + 2^n + 3^n) &\text{(binomial formula)}\\
    &\geq    \lvert   H   \lvert   ^2 - 3^{n+1},
\end{align*}
which then implies that $\lim_{n \to \infty} \ds_{H}(a \to b=\neg a \lor b) = 1$ since
\begin{align*}
    \ds_H(a \to b=\neg a \lor b) &= \frac{\sum_{x \in H}   \lvert   M_H(x)   \lvert   }{   \lvert   H   \lvert   ^2} \geq \frac{\lvert   H   \lvert   ^2 - 3^{n+1}}{\lvert   H   \lvert   ^2} \geq 1 - \frac{3^{n+1}}{(2^n + 1)^2}.
\end{align*}
\end{proof}
\end{theorem}

\begin{corollary}\label{cor:2-var-no-gaps}
None of the equations in two variables given in Example \ref{example:classical-principles} have a finite satisfiability gap.
\end{corollary}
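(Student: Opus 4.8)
The plan is to observe that Corollary \ref{cor:2-var-no-gaps} is a bookkeeping consequence of the results already assembled in this section. Of the six principles enumerated in Example \ref{example:classical-principles}, exactly four involve two free variables: Peirce's law, the contrapositive principle, the LEM--eliminator form, and material implication. (Items 1 and 2, excluded middle and double-negation elimination, are one-variable principles and fall outside the scope of this corollary.) I would dispatch the first three using Corollary \ref{cor:2-var-simple} and the last using Theorem \ref{thm:no-gap-for-material-implication}.

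First I would reduce each of Peirce's law, the contrapositive principle, and the LEM-eliminator form to the corresponding $=\top$ equation handled by Corollary \ref{cor:2-var-simple}, using Proposition \ref{prop:eqn-base-form} together with a handful of intuitionistic inequalities valid in every Heyting algebra. For Peirce's law $(x \to y) \to x = x$, the inequality $x \leq (x \to y) \to x$ always holds (since $x \wedge (x \to y) \leq x$), so one direction of the biconditional supplied by Proposition \ref{prop:eqn-base-form} is automatic and the equation is equivalent to $((x \to y) \to x) \to x = \top$. For the contrapositive principle $\neg y \to \neg x = x \to y$, the one-directional contraposition $x \to y \leq \neg y \to \neg x$ always holds (as $(x \to y) \wedge x \wedge \neg y \leq y \wedge \neg y \leq \bot$), so the equation is equivalent to $(\neg y \to \neg x) \to (x \to y) = \top$, which is the second bullet of Corollary \ref{cor:2-var-simple} after relabelling $x \leftrightarrow y$. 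The LEM-eliminator form is already presented in $=\top$ shape and matches the third bullet verbatim. In each case Corollary \ref{cor:2-var-simple} then yields the absence of a finite satisfiability gap.

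For material implication $x \to y = \neg x \vee y$, I would not pass through the Pitts-closure machinery at all: as the surrounding discussion notes, every universal or Pitts closure of this principle collapses back to $x \vee \neg x = \top$, which does have a gap, so that route fails. Instead I would directly invoke Theorem \ref{thm:no-gap-for-material-implication}, whose sequence $(H_n)_{n \in \mathbb{N}}$ satisfies $1 > \ds_{H_n}(a \to b = \neg a \vee b)$ with the degrees tending to $1$. Since each $H_n$ is therefore non-Boolean yet the degrees approach $1$ without ever reaching it, no constant $\varepsilon > 0$ can separate the value $1$ from the other attained values, which is precisely the failure of a finite satisfiability gap.

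The only genuine care required is the translation step: verifying that the equational forms as stated in Example \ref{example:classical-principles} really do coincide, as equations, with the $=\top$ forms appearing in Corollary \ref{cor:2-var-simple}. This is where the main (mild) obstacle lies, since for each of the first three principles one must confirm that the ``easy'' half of the biconditional from Proposition \ref{prop:eqn-base-form} is an intuitionistic theorem, leaving only the nontrivial implication to carry the content. Once these equivalences are recorded, the corollary follows immediately by combining Corollary \ref{cor:2-var-simple} with Theorem \ref{thm:no-gap-for-material-implication}.
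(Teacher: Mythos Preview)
Your proposal is correct and follows essentially the same approach as the paper, which states the corollary without explicit proof as an immediate consequence of Corollary \ref{cor:2-var-simple} and Theorem \ref{thm:no-gap-for-material-implication}. Your additional care in verifying that the equational forms of Example \ref{example:classical-principles} coincide (via Proposition \ref{prop:eqn-base-form} and the easy intuitionistic inequalities) with the $=\top$ forms treated in Corollary \ref{cor:2-var-simple} is accurate and simply makes explicit what the paper leaves to the reader.
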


\section{The infinite case}\label{sec:infinite}

\begin{body}
In group theory, the notion of \textit{index} provides an elegant way of measuring the ``density'' of subsets of infinite groups, and allows one to make sense of degree of satisfiability even in infinite groups. In fact, thanks to a relationship between virtual properties and positive degree of satisfiability~\cite{antolin-infinite-dc}, the concept manages to provide \textit{more} information about the algebraic structure of groups in the infinite setting!
\end{body}

\begin{body}
Since general Heyting algebras, unlike groups, do not come equipped with standard gadgets for measuring subset density quantitatively, we focus on finding purely \textit{qualitative} analogues of the finite satisfiability gap results, in terms of set-theoretic cardinality instead of density. We extend the one-variable classification to infinite Heyting algebras by showing that if an equation $\varphi(x)$ in one free variable has finite gap, then the set $\SetComp{x \in H}{\neg \varphi(x)}$ cannot be non-empty and finite in an infinite Heyting algebra (Corollary~\ref{cor:excluded-middle-gap-infinite}), and has to be at least as large as $\SetComp{x \in H}{ \varphi(x)}$ (Theorem~\ref{thm:lem-infinite-infinite}).
\end{body}

\begin{body}
Throughout this section let $H$ denote an arbitrary (not necessarily finite) Heyting algebra, and let $\mathbf{S} \subseteq H$ denote its set of non-central elements.
\end{body}

\begin{figure}
\centering
\begin{tikzpicture}[>=latex]
\newcommand{\blob}[7]{
    \draw (#1 + -1.7,2.8) node[below right] {Fig.~\ref{fig:blobs}.#4}; 
    
    \draw[#3] (#1 + -1.7,2)--(#1 + 2,2)--(#1 + 2,-1.6)--(#1 + -1.7,-1.6)--cycle;
    
    \draw[#2, rounded corners=3mm] 
    (#1 + -1.5,0)--
    (#1 + -1.3,.5)--
    (#1 + -.8,1.2)--
    (#1 ,1)--
    (#1 + 1.5,1.2)--
    (#1 + 1.75,.5)--
    (#1 + 0.875,-.7)--
    (#1,-1)--
    (#1 + -1.1,-.7)--
    cycle;
    
    \draw (#1,0.8) node[below right] {#5};
    \draw (#1-1,1.7) node[below right] {#6};
    \draw (#1,1.5) node[below right] {#7};
}

\blob{-3.3}{very thick, dotted, fill = parchment!60!white}{very thick}{A}{$\mathbf{s}$}{}{}
\blob{0.9}{very thick, white}{very thick, fill = parchment!60!white}{B}{}{$\sigma$}{}
\blob{5.1}{very thick, dotted}{very thick}{C}{}{}{$\mathbf{\neg \sigma}$}
\end{tikzpicture}

\caption{A (not closed) open set $s$ of a topological space $V$ is either dense ($\neg s = \bot$), or the inclusion $s \subseteq s \cup \mathrm{Int}(V \setminus s)$ is proper. In any case, $\sigma = s \cup \mathrm{Int}(V \setminus s)$ itself is dense (its complement has empty interior; $\neg \sigma = \bot$). If $s$ is a maximal non-closed open set, then $s = \sigma$.}
\label{fig:blobs}
\end{figure}

\begin{lemma}\label{lemma:max-noncentral-dense}
Any maximal element $s \in \mathbf{S}$ (i.e. an element so that for any $s' \in \mathbf{S}$, if $s \leq s'$ then $s = s'$) is dense in $H$ (i.e. $\neg s = \bot$).
\begin{proof}
Set $\sigma = s \vee \neg s$. Since $s$ is a non-central element, $\sigma \neq \top$, so we have that
$$\sigma \vee \neg \sigma = (s \vee \neg s) \vee \neg (s \vee \neg s) = s \vee \neg s \vee \bot = s \vee \neg s \neq \top,$$
and hence $\sigma \in \mathbf{S}$ too. But $s \leq \sigma$, so in fact $s = \sigma$. Hence, $\neg s = \neg \sigma = \neg (s \vee \neg s) = \bot$ as claimed. (See also Figure~\ref{fig:blobs} for a topological interpretation of this argument.)
\end{proof}
\end{lemma}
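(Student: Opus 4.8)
The plan is to pass from the maximal non-central element $s$ to its \emph{regularization} $\sigma := s \vee \neg s$, which lies above $s$, and to show that $\sigma$ is again non-central while being forced to be dense; maximality of $s$ then collapses $s$ onto $\sigma$ and transfers density back to $s$. The whole argument rests on the observation that the operation $s \mapsto s \vee \neg s$ can only land in the center $\exclmid_H$ when $s$ was already central.

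The key computation is that $\sigma$ is \emph{automatically} dense. Using the De Morgan identity $\neg(a \vee b) = \neg a \wedge \neg b$, valid in every Heyting algebra, together with $t \wedge \neg t = \bot$ (immediate from the definition of $\neg t$ as the largest $c$ with $t \wedge c = \bot$), I would compute
$$\neg \sigma = \neg(s \vee \neg s) = \neg s \wedge \neg\neg s = \bot,$$
the final step being the instance $t = \neg s$. Since $\neg \sigma = \bot$ gives $\sigma \vee \neg \sigma = \sigma$, membership $\sigma \in \exclmid_H$ would force $\sigma = \top$, i.e.\ $s \vee \neg s = \top$, contradicting $s \in \mathbf{S}$. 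Hence $\sigma$ is non-central, $\sigma \in \mathbf{S}$.

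It remains to invoke maximality. Because $s \leq \sigma$ and $\sigma \in \mathbf{S}$, maximality of $s$ yields $s = \sigma$, whence $\neg s = \neg \sigma = \bot$, which is exactly the desired density.

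I do not anticipate a genuine obstacle: the single idea is to replace $s$ by $s \vee \neg s$ and to notice that this regularization is always dense by the two displayed Heyting identities, and never central unless $s$ is. The topological picture in Figure~\ref{fig:blobs} makes the move transparent, since $\sigma$ fills $s$ out to a dense open set whose complement has empty interior, so that a maximal non-clopen open set must already coincide with its own regularization.
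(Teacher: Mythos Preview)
Your proof is correct and follows essentially the same approach as the paper's: both introduce $\sigma = s \vee \neg s$, use that $\neg(s \vee \neg s) = \bot$ to see that $\sigma$ is non-central (since $\sigma \vee \neg \sigma = \sigma \neq \top$), and then invoke maximality to collapse $s = \sigma$ and read off $\neg s = \bot$. The only cosmetic difference is that you isolate the density computation $\neg\sigma = \neg s \wedge \neg\neg s = \bot$ first and then deduce non-centrality, whereas the paper folds this into a single chain of equalities for $\sigma \vee \neg\sigma$.
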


\begin{lemma}\label{lemma:vee-wedge-center}
If $a \vee b$ and $a \wedge b$ both belong to $\exclmid_H$, then so do $a$ and $b$.
\begin{proof}
We prove that $(a \vee b) \vee \neg (a \vee b), (a \wedge b) \vee \neg (a \wedge b) \vdash a \vee \neg a$ is a theorem of intuitionistic logic. By completeness it follows that if $a \vee b$ and $a \wedge b$ both belong to $\exclmid_H$, then so do $a$ and $b$ in any Heyting algebra $H$. The intuitionistic logic argument goes as follows.
From $(a \vee b) \vee \neg (a \vee b)$, we have that one of $a$, $b$ or $\neg (a \vee b)$ holds. If it is $a$, then $a \vee \neg a$ holds, so we're done. If it is $\neg (a \vee b)$, then $\neg a$ also holds, and so does $a \vee \neg a$, so we're done again. So for the rest of the argument we can assume that $b$ holds. There are two further possibilities: either $a \wedge b$ holds, or $\neg (a \wedge b)$ holds. In the former case, $a$ and hence $a \vee \neg a$ immediately follow. In the latter case, we have that $b$ holds, so if $a$ held, $a \wedge b$ would follow, contradicting $\neg (a \wedge b)$. Thus, we get that $\neg a$, and consequently $a \vee \neg a$, must hold.
\end{proof}
\end{lemma}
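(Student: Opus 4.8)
The plan is to reduce the claim to the soundness of the Heyting-algebra semantics of intuitionistic propositional logic. Treating $a$ and $b$ as propositional variables, I would first show that the implication
$$\bigl[(a \vee b) \vee \neg(a \vee b)\bigr] \wedge \bigl[(a \wedge b) \vee \neg(a \wedge b)\bigr] \;\to\; (a \vee \neg a)$$
is a theorem of intuitionistic propositional logic. Granting this, soundness gives that the interpretation of the implication equals $\top$ in every Heyting algebra under every assignment, so the corresponding inequality
$$\bigl[(a \vee b) \vee \neg(a \vee b)\bigr] \wedge \bigl[(a \wedge b) \vee \neg(a \wedge b)\bigr] \;\leq\; a \vee \neg a$$
holds for all $a, b \in H$. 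The hypotheses $a \vee b \in \exclmid_H$ and $a \wedge b \in \exclmid_H$ say precisely that the two bracketed terms on the left both equal $\top$; hence the left-hand side is $\top$, forcing $a \vee \neg a = \top$, i.e. $a \in \exclmid_H$. Since the hypotheses are symmetric in $a$ and $b$, the same derivation with the roles interchanged yields $b \in \exclmid_H$.

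The heart of the argument is the intuitionistic derivation, which I would carry out by a natural-deduction case analysis. From the first hypothesis $(a \vee b) \vee \neg(a \vee b)$ one splits into cases according to which disjunct holds. If $a$ holds, then $a \vee \neg a$ is immediate; if $\neg(a \vee b)$ holds, then $\neg a$ follows (intuitionistically $\neg(a\vee b) \vdash \neg a$, since assuming $a$ gives $a \vee b$ and hence $\bot$), and again $a \vee \neg a$ holds. The only remaining branch is that $b$ holds, and this is where the second hypothesis must be brought in: splitting $(a \wedge b) \vee \neg(a \wedge b)$, if $a \wedge b$ holds then so does $a$, while if $\neg(a \wedge b)$ holds then, together with $b$, it forces $\neg a$ (for $a$ would give $a \wedge b$, a contradiction). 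In every branch we obtain $a \vee \neg a$, completing the derivation.

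The main subtlety, and the reason the lemma is not wholly trivial, is the middle case where $b$ holds: here neither hypothesis alone decides $a$ against $\neg a$, and one genuinely needs the meet condition $a \wedge b \in \exclmid_H$. I expect no real obstacle beyond organizing this case split cleanly; the one point requiring care is the appeal to soundness, where I would note that provability of $\varphi \to \psi$ in intuitionistic propositional logic yields $\varphi \leq \psi$ pointwise in every Heyting algebra, so that $\varphi = \top$ implies $\psi = \top$. An alternative, purely algebraic route would attempt to exhibit a complement of $a$ built from $\neg(a \vee b)$ and $\neg(a \wedge b)$; the join hypothesis alone readily gives $(a \vee \neg a) \vee b = \top$, but converting this into $a \vee \neg a = \top$ appears to require the meet hypothesis in an equally essential way, so the logical argument seems the most economical.
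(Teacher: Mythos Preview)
Your proposal is correct and follows essentially the same approach as the paper: both reduce the lemma to an intuitionistic derivation of $a \vee \neg a$ from the two centrality hypotheses, carried out by the identical three-way case split on $(a \vee b) \vee \neg(a \vee b)$ and, in the $b$-branch, a further split on $(a \wedge b) \vee \neg(a \wedge b)$. Your write-up is somewhat more explicit about the soundness step translating the derivation into the Heyting-algebraic inequality, but the substantive argument is the same.
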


\begin{lemma}\label{lemma:belt}
For any $x \in H$ and any maximal non-central element $\sigma \in \mathbf{S}$, either $x \leq \sigma$ or $x \lor \sigma = \top$.
\begin{proof}
Since $\sigma$ is maximal non-central, then either $x \vee \sigma = \sigma$ or else $x \vee \sigma$ is central. In the first case $x \leq \sigma$ as desired, and in the second case, we can calculate as follows:
\begin{align*}
    \top &= (\sigma \lor x) \lor \neg (\sigma \lor x) \\
    &= (\sigma \lor x) \lor (\neg \sigma \land \neg x) \\
    &= (\sigma \lor x) \lor (\bot \land \neg x) & \text{(by Lemma~\ref{lemma:max-noncentral-dense})} \\
    &= \sigma \lor x.
\end{align*}
\end{proof}
\end{lemma}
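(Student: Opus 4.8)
The plan is to dichotomize on whether the join $x \vee \sigma$ escapes the set $\mathbf{S}$ of non-central elements. Since $\sigma \leq x \vee \sigma$ holds automatically, and $\sigma$ is by hypothesis a \emph{maximal} element of $\mathbf{S}$, the element $x \vee \sigma$ must either coincide with $\sigma$ or fail to lie in $\mathbf{S}$ at all. In the first case we are done immediately: $x \leq x \vee \sigma = \sigma$, which is the first alternative. So all the real content sits in the second case, where $x \vee \sigma$ is central, i.e. $x \vee \sigma \in \exclmid_H$.

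In that case I would write down the excluded-middle witness for $x \vee \sigma$, namely $(x \vee \sigma) \vee \neg(x \vee \sigma) = \top$, and then collapse the second disjunct. The crucial identity is the De Morgan law $\neg(a \vee b) = \neg a \wedge \neg b$, which (unlike its order-dual) is valid in every Heyting algebra; applying it yields $\neg(x \vee \sigma) = \neg x \wedge \neg \sigma$. Now I invoke Lemma~\ref{lemma:max-noncentral-dense}, which says that a maximal non-central element is dense, so $\neg \sigma = \bot$. Substituting gives $\neg(x \vee \sigma) = \neg x \wedge \bot = \bot$, and the excluded-middle equation degenerates to $(x \vee \sigma) \vee \bot = \top$, that is, $x \vee \sigma = \top$, which is exactly the second alternative.

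The only genuine obstacle is recognizing which ingredients make the collapse work: one must remember that the correct half of De Morgan is available over Heyting algebras, and that the density of $\sigma$ supplied by Lemma~\ref{lemma:max-noncentral-dense} is precisely what annihilates the negated join. Once those two facts are in hand the computation is a two-line chain of equalities. Notably, the argument uses neither induction nor any finiteness assumption on $H$, so it applies verbatim to the infinite Heyting algebras that this section is concerned with.
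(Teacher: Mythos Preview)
Your proposal is correct and follows essentially the same route as the paper's proof: the same dichotomy on whether $x \vee \sigma$ equals $\sigma$ or is central, followed by the same De~Morgan/density collapse via Lemma~\ref{lemma:max-noncentral-dense}. The only cosmetic difference is that the paper presents the second case as a single aligned chain of equalities rather than narrating the substitution of $\neg\sigma = \bot$.
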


\begin{corollary}\label{cor:belt}
For any $x \in \exclmid_H$ and any maximal non-central element $\sigma \in \mathbf{S}$, we have precisely one of $x \vee \sigma = \top$ or $\neg x \vee \sigma = \top$.
\begin{proof}
It's clear that both equalities cannot hold. By Lemma~\ref{lemma:belt}, either $x \vee \sigma = \top$ or $x \leq \sigma$. In the former case we have our claim. In the latter case, since $\neg x \vee x = \top$ and $x \leq \sigma$, we get that $\neg x \vee \sigma = \top$ as desired.
\end{proof}
\end{corollary}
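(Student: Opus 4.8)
The plan is to reduce everything to the dichotomy supplied by Lemma~\ref{lemma:belt}: any element $z \in H$ satisfies either $z \leq \sigma$ or $z \vee \sigma = \top$. Note first that, since $\sigma$ is non-central and hence $\sigma \neq \top$, these two alternatives are genuinely mutually exclusive, because $z \leq \sigma$ forces $z \vee \sigma = \sigma \neq \top$. I would apply this dichotomy to both $x$ and $\neg x$, and then exploit the hypothesis $x \in \exclmid_H$, i.e.\ $x \vee \neg x = \top$, to pin down which alternative occurs in each case.

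First I would establish the ``at least one'' half. Applying Lemma~\ref{lemma:belt} to $x$: if $x \vee \sigma = \top$, the claim's first disjunct already holds. Otherwise $x \leq \sigma$, and I would argue that $\neg x \not\leq \sigma$: for if both $x \leq \sigma$ and $\neg x \leq \sigma$, then $\top = x \vee \neg x \leq \sigma$, forcing $\sigma = \top$ and contradicting non-centrality. Consequently the dichotomy of Lemma~\ref{lemma:belt} applied to $\neg x$ must land on the side $\neg x \vee \sigma = \top$, giving the second disjunct.

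The part that needs slightly more care is the exclusivity assertion ``precisely one,'' since the step above only yields ``at least one.'' I would dispatch this with a single distributive computation: if both $x \vee \sigma = \top$ and $\neg x \vee \sigma = \top$ held, then using the distributive-lattice identity $(a \vee c) \wedge (b \vee c) = (a \wedge b) \vee c$ we would obtain
\[
\top = (x \vee \sigma) \wedge (\neg x \vee \sigma) = (x \wedge \neg x) \vee \sigma = \bot \vee \sigma = \sigma,
\]
once more contradicting $\sigma \neq \top$. (Equivalently, one can note that across the branches of the previous paragraph exactly one of $x,\neg x$ sits below $\sigma$, so exactly one of the two joins can reach $\top$.)

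I expect no serious obstacle: the result drops out as soon as the two instances of Lemma~\ref{lemma:belt} are combined with $x \vee \neg x = \top$. The only point deserving attention is to prove ``precisely one'' rather than merely ``at least one,'' and here the facts $x \wedge \neg x = \bot$ (valid in every Heyting algebra) together with $\sigma \neq \top$ (non-centrality of $\sigma$) are exactly what rule out both joins equalling $\top$. Density of $\sigma$ (Lemma~\ref{lemma:max-noncentral-dense}) enters only indirectly, through its role in justifying Lemma~\ref{lemma:belt} itself.
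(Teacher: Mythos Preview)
Your proof is correct and follows the same line as the paper's: apply Lemma~\ref{lemma:belt} to $x$, and in the case $x \leq \sigma$ use $x \vee \neg x = \top$ to rule out $\neg x \leq \sigma$ before re-applying the lemma to $\neg x$. The one difference is that you explicitly verify the ``precisely one'' exclusivity via the distributive computation $(x \vee \sigma)\wedge(\neg x \vee \sigma)=(x\wedge\neg x)\vee\sigma=\sigma$, whereas the paper's proof only writes out the ``at least one'' direction; in this respect your version is slightly more complete.
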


\begin{theorem}\label{thm:lem-finite-infinite}
Take any maximal non-central element $\sigma \in \mathbf{S}$. The function $f: \exclmid_H \rightarrow \mathbf{S}$ given by the equation
$$f(x) = \begin{cases}
  \sigma \wedge x & \text{if $\sigma \vee x = \top$} \\
  \sigma \wedge \neg x & \text{otherwise}
\end{cases}
$$
is well-defined and two-to-one.
\begin{proof}
For well-definedness, apply Lemma~\ref{lemma:vee-wedge-center} and Corollary~\ref{cor:belt}. Now take two central elements $c,d$ such that $\sigma \vee c = \top$ and $\sigma \vee d = \top$. Assume that $f(c) = \sigma \wedge c = \sigma \wedge d = f(d)$. Then we have that 
\[c = c \vee (\sigma \wedge c) = c \vee (\sigma \wedge d) = (c \vee \sigma) \wedge (c \vee d) = \top \wedge (c \vee d) = c \vee d.\] By a similar argument, we have $d = c \vee d$. Thus $c = c \vee d = d$ and hence (applying Corollary~\ref{cor:belt} once more) the function $f$ is two-to-one as claimed.
\end{proof}
\end{theorem}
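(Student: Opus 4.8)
The plan is to separate the statement into two independent assertions: that $f$ genuinely maps $\exclmid_H$ into $\mathbf{S}$, and that every element of its image is hit by exactly two elements of $\exclmid_H$. I would treat well-definedness first, then exhibit a canonical pairing of preimages, and finally rule out any larger fibers via an injectivity computation.

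For well-definedness I would first note that the two clauses are exhaustive and mutually exclusive: since $x \in \exclmid_H$, the Corollary preceding the theorem guarantees that exactly one of $\sigma \vee x = \top$ or $\sigma \vee \neg x = \top$ holds, which is precisely the case split defining $f$. It then remains to check that the output is non-central. In the first clause $f(x) = \sigma \wedge x$, and I would argue by contradiction: if $\sigma \wedge x$ were central, then $\sigma \wedge x$ and $\sigma \vee x = \top$ would both lie in $\exclmid_H$, so Lemma~\ref{lemma:vee-wedge-center} would force both $\sigma$ and $x$ into the center, contradicting $\sigma \in \mathbf{S}$. The second clause is identical with $\neg x$ in place of $x$, using $\sigma \vee \neg x = \top$. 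I expect this to be the main obstacle, since it is not at all obvious a priori that $\sigma \wedge x$ avoids the center, and it is exactly here that the somewhat delicate Lemma~\ref{lemma:vee-wedge-center} earns its keep.

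Next I would produce the pairing underlying the ``two-to-one'' claim by showing $x$ and $\neg x$ always share a fiber. Because $x$ is central we have $\neg\neg x = x$, so applying the Corollary to $\neg x$ shows that $x$ falls under the first clause precisely when $\neg x$ falls under the second; a direct computation then gives $f(\neg x) = \sigma \wedge \neg\neg x = \sigma \wedge x = f(x)$. Moreover $x \neq \neg x$ always holds, since $x = \neg x$ combined with $x \wedge \neg x = \bot$ would force $x = \bot$ and hence $\neg x = \top \neq x$. Thus each fiber contains at least the two distinct elements $x$ and $\neg x$.

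Finally I would show no fiber is larger by proving $f$ injective on the set of $x$ satisfying the first clause; this is the one real calculation. Given central $c,d$ with $\sigma \vee c = \sigma \vee d = \top$ and $\sigma \wedge c = \sigma \wedge d$, absorption and distributivity yield $c = c \vee (\sigma \wedge c) = c \vee (\sigma \wedge d) = (c \vee \sigma) \wedge (c \vee d) = \top \wedge (c \vee d) = c \vee d$, and symmetrically $d = c \vee d$, so $c = d$. To close the argument I would observe that every value in the image is attained by a first-clause element (if $y$ is second-clause then $\neg y$ is first-clause with $f(\neg y) = f(y)$), so any preimage $y$ of a given value satisfies either $y = x$ or $\neg y = x$, i.e. lies in $\{x, \neg x\}$. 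Combining the three steps establishes that $f$ is well-defined and exactly two-to-one.
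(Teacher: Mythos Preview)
Your proposal is correct and follows essentially the same approach as the paper: well-definedness via Lemma~\ref{lemma:vee-wedge-center}, and the fiber bound via the absorption/distributivity computation showing $c = c \vee d = d$ for first-clause elements. Your write-up is in fact more complete than the paper's terse proof, which leaves implicit both the role of the preceding Corollary (identifying the ``otherwise'' clause with $\sigma \vee \neg x = \top$, needed for the second-clause well-definedness) and the entire ``at least two preimages'' direction via $f(x)=f(\neg x)$; the paper only writes out injectivity on first-clause elements and declares the result.
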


\begin{corollary}\label{cor:excluded-middle-gap-infinite}
Every non-Boolean Heyting algebra that has only finitely many non-central elements is finite.
\begin{proof}
A non-Boolean Heyting algebra that has finitely many non-central elements always has a maximal non-central element. The result follows immediately from Theorem~\ref{thm:lem-finite-infinite}.
\end{proof}
\end{corollary}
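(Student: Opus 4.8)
The plan is to reduce the statement directly to Theorem \ref{thm:lem-finite-infinite} by first producing a maximal non-central element and then using the two-to-one map to bound the size of the center. First I would observe that since $H$ is non-Boolean, its center cannot be all of $H$: by Proposition \ref{prop:excluded_mid_largest_BA} the center $\exclmid_H$ is the largest Boolean subalgebra of $H$, so $H = \exclmid_H$ would force $H$ to be Boolean. Hence the set $\mathbf{S}$ of non-central elements is non-empty. By hypothesis $\mathbf{S}$ is also finite, and any finite non-empty subposet of $H$ has a maximal element, so I can fix a maximal $\sigma \in \mathbf{S}$.

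With such a $\sigma$ in hand, I would invoke Theorem \ref{thm:lem-finite-infinite}, which yields a two-to-one map $f \colon \exclmid_H \to \mathbf{S}$. Because every fibre of a two-to-one map has (at most) two elements, the cardinality of the domain is at most twice that of the codomain, giving $|\exclmid_H| \leq 2\,|\mathbf{S}|$. Since $\mathbf{S}$ is finite, this forces $\exclmid_H$ to be finite as well.

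Finally, $H$ is the disjoint union of its center $\exclmid_H$ and its non-central part $\mathbf{S}$, so $|H| = |\exclmid_H| + |\mathbf{S}|$. Having shown that both summands are finite, I conclude that $H$ is finite, as claimed.

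I do not expect any serious obstacle here: essentially all of the substantive content has already been discharged by Theorem \ref{thm:lem-finite-infinite}, and what remains is bookkeeping. The one point deserving a moment's care is the existence of the maximal non-central element, which would fail were $\mathbf{S}$ empty; this is exactly what the non-Booleanness assumption rules out, and it is the only place that hypothesis is used.
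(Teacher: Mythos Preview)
Your proposal is correct and follows exactly the same route as the paper's proof: produce a maximal non-central element from the non-empty finite set $\mathbf{S}$, then invoke Theorem~\ref{thm:lem-finite-infinite} to bound $|\exclmid_H|$ by $2|\mathbf{S}|$ and conclude finiteness of $H = \exclmid_H \cup \mathbf{S}$. The paper merely compresses all of this into the phrase ``follows immediately,'' whereas you have spelled out the bookkeeping; there is no substantive difference.
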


\begin{body}
Notice that Theorem~\ref{thm:lem-finite-infinite} immediately yields an alternative proof of Theorem~\ref{thm:excluded-middle-gap} (since the map $f$ in Theorem~\ref{thm:lem-finite-infinite} is two-to-one). Corollary~\ref{cor:excluded-middle-gap-infinite} is an infinitary analogue of the latter. We obtain an even stronger infinitary result in Theorem~\ref{thm:lem-infinite-infinite}.
\end{body}

\begin{theorem}\label{thm:lem-infinite-infinite}
Consider an infinite Heyting algebra $H$, and define the following subsets:
\begin{align*}
    C = \SetComp{x \in H}{x \vee \neg x = \top} \\
    D = \SetComp{x \in H}{x \vee \neg x \neq \top}
\end{align*}
We always have either $   \lvert   D   \lvert    = 0$ or $   \lvert   C   \lvert    \leq    \lvert   D   \lvert   $.
\begin{proof}
If $   \lvert   C   \lvert   <\infty$ or $   \lvert   D   \lvert   <\infty$, then the result follows from Corollary~\ref{cor:excluded-middle-gap-infinite}. For the rest of the proof, we assume that the sets $C$ and $D$ are both infinite.
Choose an element $x \in D$ and set $s = x \vee \neg x$. Then we have $\neg s = \neg (x \vee \neg x) = \bot$, so $s \vee \neg s = s \vee \bot = s \neq \top$, and therefore $s \in D$. Define the sets
\begin{align*}
    C_s = \SetComp{x \in C}{x \leq s} \\
    C^s = \SetComp{x \in C}{x \not\leq s}
\end{align*}
and notice that $   \lvert   C_s   \lvert    \leq    \lvert   C^s   \lvert   $ since $x \mapsto \neg x$ gives an injective function from $C_s$ to $C^s$. Since $C_s \cup C^s = C$ it follows by cardinal arithmetic that $   \lvert   C^s   \lvert    =    \lvert   C   \lvert   $. We now construct an injective map from $C^s$ to $D$, thus showing that $   \lvert   C   \lvert    =    \lvert   C^s   \lvert    \leq    \lvert   D   \lvert   $.
Consider the function
\begin{align*}
    f: C^s \rightarrow D~ ~ ~ ~\\
    f(x) = s \wedge x\text{.}
\end{align*}
First we show that $f$ is well-defined. Take an arbitrary $x\in C^s$ and assume for a contradiction that $f(x) \not\in D$. Then $f(x) \in C$, so that $(s \wedge x) \vee \neg (s \wedge x) = \top$. Noticing that $\neg(s \wedge x) = \neg(x \wedge s) = x \rightarrow \neg s = x \rightarrow \bot = \neg x$, we calculate as follows:
\begin{align*}
    s \vee \neg x &= s \vee \neg (s \wedge x) \\
    &= s \vee (s \wedge x) \vee \neg (s \wedge x) &\text{(absorption)} \\
    &= s \vee \top &\text{(by $s \wedge x \in C$)} \\
    &= \top.
\end{align*}
But then $x \wedge s = (x \wedge s) \vee (x \wedge \neg x) = x \wedge (s \vee \neg x) = x$, so $x \leq s$, contradicting $x \in C^s$. This shows the well-definedness of the function $f: C^s \rightarrow D$. Now we need to show that $f$ is injective. So assume that $s \wedge x = s \wedge y$ for some $x,y \in C^s$. Then we have $\neg x = \neg (s \wedge x) = \neg (s \wedge y) = \neg y$ by repeating an argument above, and since $x, y \in C$ we can negate both sides to get $x = y$.
\end{proof}
\end{theorem}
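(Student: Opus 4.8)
The plan is to produce an injection $C \hookrightarrow D$ whenever $D \neq \emptyset$, so that $|C| \leq |D|$ follows at once. First I would dispose of the degenerate cases: if either $C$ or $D$ is finite, Corollary~\ref{cor:excluded-middle-gap-infinite} settles the matter, since a non-Boolean ($D \neq \emptyset$) Heyting algebra with only finitely many non-central elements must be finite, contradicting the infinitude of $H$. Thus $D \neq \emptyset$ already forces $D$ to be infinite, and if $C$ happens to be finite the desired bound is trivial. Hence for the substantive part I may assume both $C$ and $D$ are infinite and aim to build the injection by hand.

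The central difficulty, compared with Theorem~\ref{thm:lem-finite-infinite}, is that an infinite $H$ need not contain any maximal non-central element, so the two-to-one map constructed there is simply unavailable. The replacement anchor I would use is a \emph{dense} non-central element: pick any $x \in D$ and set $s = x \vee \neg x$. A one-line computation gives $\neg s = \neg x \wedge \neg\neg x = \bot$, so $s$ is dense, while $s \vee \neg s = s \neq \top$ shows $s \in D$. The density $\neg s = \bot$ is exactly the property that will later make the map well-defined.

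Next I would split the center relative to $s$, writing $C_s = \SetComp{x \in C}{x \leq s}$ and $C^s = \SetComp{x \in C}{x \not\leq s}$. The negation map restricts to an injection $C_s \hookrightarrow C^s$: it is injective on $C$ because central elements satisfy $\neg\neg x = x$, and $x \leq s$ forces $\neg x \not\leq s$, as otherwise $\top = x \vee \neg x \leq s$ would contradict $s \neq \top$. Therefore $|C_s| \leq |C^s|$, and since $C = C_s \sqcup C^s$ with $C$ infinite, cardinal arithmetic yields $|C^s| = |C|$.

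Finally I would define $f : C^s \to D$ by $f(x) = s \wedge x$ and verify the two required properties. For well-definedness, suppose toward a contradiction that $s \wedge x \in C$; using $\neg s = \bot$ one computes $\neg(s \wedge x) = x \to \neg s = \neg x$, and then absorption together with $(s \wedge x) \in C$ gives $s \vee \neg x = \top$, whence $x = x \wedge (s \vee \neg x) = x \wedge s \leq s$, contradicting $x \in C^s$. For injectivity, $s \wedge x = s \wedge y$ gives $\neg x = \neg(s \wedge x) = \neg(s \wedge y) = \neg y$, and applying $\neg\neg$ (valid since $x,y$ are central) returns $x = y$. Combining $|C| = |C^s|$ with this injection produces $|C| \leq |D|$. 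The step I expect to be the main obstacle is precisely the well-definedness check: it is the one place where the argument could break down, and it is rescued entirely by the density $\neg s = \bot$ established in the second step.
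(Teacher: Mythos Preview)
Your proposal is correct and follows essentially the same approach as the paper's own proof: both reduce to the case where $C$ and $D$ are infinite via Corollary~\ref{cor:excluded-middle-gap-infinite}, pick a dense non-central $s = x \vee \neg x$, split $C$ into $C_s$ and $C^s$, use negation to show $|C_s| \leq |C^s|$ and hence $|C^s| = |C|$, and then verify that $f(x) = s \wedge x$ gives an injection $C^s \hookrightarrow D$ via the identity $\neg(s \wedge x) = \neg x$. Your write-up is in fact slightly more explicit than the paper's at the step showing $\neg x \not\leq s$ for $x \in C_s$.
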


\begin{corollary}\label{cor:topology}
An infinite $T_0$ topological space is either discrete or has more non-closed open sets than clopen sets.
\begin{proof}
The lattice of open sets of a topological space forms a (complete) Heyting algebra if we define $x \rightarrow y$ as the interior of $\overline{x} \cup y$, so Theorem~\ref{thm:lem-infinite-infinite} applies.
\end{proof}
\end{corollary}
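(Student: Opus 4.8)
The plan is to realise the statement as a direct instance of Theorem~\ref{thm:lem-infinite-infinite} applied to the frame of open sets. First I would fix the complete Heyting algebra $H = \mathcal{O}(X)$ whose elements are the open subsets of $X$, with $\wedge = \cap$, $\vee = \cup$, $\bot = \emptyset$, $\top = X$, and implication $U \to V = \mathrm{Int}((X \setminus U) \cup V)$, so that $\neg U = \mathrm{Int}(X \setminus U)$ is the exterior of $U$. The crucial identification is between the center $\exclmid_H$ and the clopen sets: for an open $U$ one computes $U \vee \neg U = U \cup \mathrm{Int}(X \setminus U)$, and this equals $X$ exactly when $U$ has empty boundary, i.e.\ exactly when $U$ is clopen. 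Thus the set $C$ appearing in Theorem~\ref{thm:lem-infinite-infinite} is precisely the collection of clopen sets, while $D$ is the collection of non-clopen open sets.

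With this dictionary in place, Theorem~\ref{thm:lem-infinite-infinite} yields the dichotomy $|D| = 0$ or $|C| \leq |D|$. In the second case, every non-clopen open set is in particular a non-clopen subset of $X$, so $|C| \leq |D|$ immediately gives at least as many non-clopen sets as clopen ones, which is the desired conclusion. It therefore remains only to analyse the degenerate case $|D| = 0$, i.e.\ the case in which $\mathcal{O}(X)$ is Boolean because every open set is already clopen.

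This degenerate case is the main obstacle, and it is where separability enters. The subtlety is that ``every open set is clopen'' is strictly weaker than discreteness (a space partitioned into infinitely many non-singleton blocks witnesses this), so one cannot simply read off the first disjunct. Instead I would use a countable dense subset $E \subseteq X$ to control the clopen sets directly: for any clopen $U$ one has $U = \overline{U \cap E}$, since $U \cap E$ is dense in the open-and-closed set $U$. Consequently each clopen set is determined by its trace $U \cap E \subseteq E$, so there are at most $2^{\aleph_0}$ clopen sets. If $X$ is discrete we are in the first disjunct and done; otherwise there is a non-clopen subset $N$, and since the clopen sets form a field of sets closed under symmetric difference, the assignment $U \mapsto U \mathbin{\triangle} N$ injects the clopen sets into the non-clopen sets (as $U \mathbin{\triangle} N$ clopen would force $N = U \mathbin{\triangle} (U \mathbin{\triangle} N)$ to be clopen), giving $|C| \leq |\{\text{non-clopen sets}\}|$ once more. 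When $X$ is uncountable this comparison is even strict, since then there are $2^{|X|} > 2^{\aleph_0} \geq |C|$ subsets in total.

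I expect the only genuinely delicate points to be the boundary computation identifying $\exclmid_H$ with the clopen sets and the careful treatment of the Boolean/degenerate case above, where the gap between ``$\mathcal{O}(X)$ is Boolean'' and ``$X$ is discrete'' must be bridged by the separability-based cardinality count; once the frame $\mathcal{O}(X)$ is recognised as a Heyting algebra, the remaining translation is purely formal.
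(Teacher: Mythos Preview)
Your core approach matches the paper's: recognise $\mathcal{O}(X)$ as a complete Heyting algebra, identify the center with the clopen sets, and invoke Theorem~\ref{thm:lem-infinite-infinite}. The paper's proof is a single sentence and stops there, reading the dichotomy ``$|D|=0$ or $|C|\le|D|$'' directly as the stated conclusion. You go further by treating the degenerate branch $|D|=0$ separately, correctly observing that a Boolean frame of opens need not arise from a discrete space (your partition example, or simply the indiscrete topology); your symmetric-difference injection $U\mapsto U\mathbin{\triangle} N$ handles that branch cleanly. In this sense your argument is the same as the paper's plus a patch the paper leaves implicit.

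Two observations worth making. First, the injection $U\mapsto U\mathbin{\triangle} N$ does not actually use the hypothesis $|D|=0$: whenever $X$ is not discrete there is some non-clopen subset $N$, and the same map already injects the clopens into the non-clopen subsets. So under the reading of ``non-clopen sets'' as arbitrary subsets (which the indiscrete counterexample forces), your degenerate-case argument proves the entire corollary on its own, with neither separability nor Theorem~\ref{thm:lem-infinite-infinite} required; the detour through the Heyting-algebra machinery is then illustrative rather than essential. Second, a small gap: you apply Theorem~\ref{thm:lem-infinite-infinite} without checking that the Heyting algebra $\mathcal{O}(X)$ is itself infinite, and infiniteness of $X$ does not guarantee this (indiscrete topology again). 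Your symmetric-difference argument covers that case as well, but you should say so explicitly rather than leave it to the case split coming out of the theorem.
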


\begin{body}
We remark that for each equation $\varphi(x)$ in one free variable which does not have a finite satisfiability gap, one can use standard model-theoretic techniques to construct an infinite Heyting algebra $H_\varphi$ such that $H_\varphi \not\models \varphi$, but where only finitely many elements of $H$ fail to satisfy $\varphi$. Whether this is a coincidence, or a deep feature of degree of satisfiability in Heyting algebras, remains to be seen.
\end{body}

\section{Towards black-box lattice theory}\label{sec:black-box}

\begin{body}
Babai and Szemerédi introduced black-box groups in their seminal 1984 article \cite{babai-szemeredi-bbox}, as a uniform idealized setting for the study of randomized algorithm complexity on permutation and matrix groups. Since then, the field of black-box algebra has found applications in cryptography, and has so far come to encompass the study of black-box groups and fields, and to some extent rings and even black-box projective planes \cite{borovik-yalcinkaya-projective}.
\end{body}

\begin{body}
Borovik and Yalçınkaya pioneered a successful integrated approach \cite{borovik-yalcinkaya-bbox-attacks} to black-box algebra, characterized by the application of black-box constructions associating higher structures to black-box algebraic structures (e.g. studying a black-box ring $R$ by constructing black-box $R$-modules, or black-box fields $F$ by constructing projective planes with underlying field $F$ \cite{borovik-yalcinkaya-projective}). Many algebraic (not to mention universal algebraic) constructions associate lattices to group-like and ring-like structures: e.g. Coquand \cite{coquand-spaces} suggests that ring spectra are best treated in terms of distributive lattices in settings where obtaining prime ideals is computationally difficult\footnote{Consider simply the ring $\mathbb{Z}/n\mathbb{Z}$, where exhibiting prime ideals amounts to factorizing $n$.}. Such constructions serve to motivate the black-box algebraic investigation of lattice-like structures.
\end{body}

\begin{body}
In this section we make the basic observation that finite satisfiability gap results yield efficient one-sided Monte Carlo algorithms for black-box structures. In particular, due to Theorem~\ref{thm:excluded-middle-gap}, there is an efficient such algorithm for testing whether a black-box Heyting algebra is Boolean. Our Definition~\ref{def:bbox-ha} coincides with the generic axiomatic description of black-box algebraic structures given by Borovik and Yalçınkaya \cite{borovik-yalcinkaya-bbox-attacks}, specialized to the case of Heyting algebras.
\end{body}

\begin{definition}\label{def:bbox-ha}
A black-box Heyting algebra consists of the following data:
\begin{itemize}
    \item A finite \textit{underlying Heyting algebra} $\langle H, \wedge, \vee, \rightarrow, \bot, \top \rangle$,
    \item a \textit{set of cryptoelements} $X$, bitstrings of a fixed finite length $\ell(X)$,
    \item a \textit{decryption function} $\pi: X \rightarrow H$,
\end{itemize}
along with the following fixed algorithms operating on bitstrings:
\begin{itemize}
    \item \textbf{BB1:} a randomized algorithm that takes no inputs and produces a random cryptoelement as output, in such a way that repeatedly applying the decryption function $\pi$ to the output of the algorithm produces uniformly distributed elements over $H$ (but not necessarily over $X$!);
    \item \textbf{BB2:} algorithms $\hat{\vee}$ ($\hat{\rightarrow}$, etc.) that take as input two cryptoelements $x, y \in X$, and produce as output a cryptoelement $x \hat{\vee} y$ ($x \hat{\rightarrow} y$ etc.) such that $\pi(x \hat{\vee} y) = \pi(x) \vee \pi(y)$ (etc.);
    \item \textbf{BB2':} an algorithm that produces a cryptoelement $\hat{\bot} \in X$ such that $\pi(\hat{\bot}) = \bot$ (the similar algorithm for $\top$ could be given but is redundant);
    \item \textbf{BB3:} an algorithm that takes as input two cryptoelements $x, y$, and produces the output bitstring $1$ only if $\pi(x) = \pi(y)$, and outputs $0$ otherwise.
\end{itemize}
When considering uniformly given families of black-box structures, the additional assumption is made that these algorithms operate in time polynomial in the size of $\ell(X)$.
\end{definition}

\begin{definition}
Consider a first-order sentence $\varphi$ in the language of Heyting algebras. A \textit{one-sided black-box Monte Carlo algorithm for testing property $\varphi$} is a randomized algorithm $A$ that takes as input $\langle X, \hat{\vee}, \dots \rangle$ for a black-box Heyting algebra $\langle H, X, \pi, \hat{\vee}, \dots \rangle$, such that
\begin{itemize}
    \item if $H \models \varphi$ then $A$ returns the bitstring $1$;
    \item if $H \not\models \varphi$ then with probability larger than $\frac{1}{2}$ $A$ returns the bitstring $0$, and the bitstring $1$ otherwise;
\end{itemize}
all in time polynomial in $\ell(X)$.
\end{definition}

\begin{theorem}\label{thm:bbox-boolean-recognition}
There is a one-sided black-box Monte Carlo algorithm for testing whether a black-box Heyting algebra is a Boolean algebra.
\begin{proof}
Use \textbf{BB1} to pick a random element $x \in X$, then \textbf{BB2'} to obtain $\hat{\bot}, \hat{\top}$ and \textbf{BB2} to compute $x \hat{\vee} (x \hat{\rightarrow} \hat{\bot})$. Using \textbf{BB3}, decide if  $$\pi( x \hat{\vee} (x \hat{\rightarrow} \hat{\bot}) ) = \pi(x) \vee \neg \pi(x) = \pi(\hat{\top}) = \top.$$
Repeat the same process with another random element $y \in X$. If both $\pi(x) \vee \neg \pi(x) = \top$ and $\pi(y) \vee \neg \pi(y) = \top$ hold, output the bitstring $1$. Otherwise, output the bitstring $0$.

If $H$ is Boolean, then of course $\pi(x) \vee \neg \pi(x) = \pi(y) \vee \neg \pi(y) = \top$ always holds, so the algorithm outputs $1$ as desired. Otherwise, by Theorem~\ref{thm:excluded-middle-gap}, the probability that both $\pi(x)$ and $\pi(y)$ belong to the center of $H$ is no more than $\frac{4}{9}$, so with probability larger than $\frac{1}{2}$ the algorithm returns the bitstring $0$.
\end{proof}
\end{theorem}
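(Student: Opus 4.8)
The plan is to build an explicit one-sided Monte Carlo algorithm that samples two random cryptoelements and checks whether each satisfies the law of excluded middle, leveraging the finite satisfiability gap of $\frac{1}{3}$ established in Theorem~\ref{thm:excluded-middle-gap}. The key structural fact we need is that testing ``$H$ is Boolean'' reduces to testing a single equation, namely $x \vee \neg x = \top$: by Proposition~\ref{prop:excluded_mid_largest_BA}, a finite Heyting algebra is Boolean precisely when every element lies in its center $\exclmid_H$, i.e. when $\ds_H(x \vee \neg x = \top) = 1$.

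First I would describe how to evaluate the excluded-middle predicate on a single random cryptoelement using only the black-box primitives. Using \textbf{BB1} I sample a cryptoelement $x \in X$ whose decryption $\pi(x)$ is uniformly distributed over $H$; using \textbf{BB2'} I obtain $\hat{\bot}$ (and $\hat{\top}$, or equivalently I can form $\hat{\top}$ as needed); using \textbf{BB2} I compute the cryptoelement $x \,\hat{\vee}\,(x \,\hat{\rightarrow}\, \hat{\bot})$, whose decryption is $\pi(x) \vee \neg\pi(x)$; and using \textbf{BB3} I test whether this equals $\top$. All of these run in time polynomial in $\ell(X)$, so the whole single-element test is efficient.

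Next I would specify the decision rule: sample two independent cryptoelements $x, y$, run the excluded-middle test on each, and output the bitstring $1$ if and only if both tests succeed (outputting $0$ otherwise). The completeness side is immediate: if $H$ is Boolean, then $\pi(x) \vee \neg\pi(x) = \top$ holds for every element, so the algorithm always returns $1$. For the soundness side, suppose $H$ is not Boolean. Then by Theorem~\ref{thm:excluded-middle-gap} we have $\ds_H(x \vee \neg x = \top) \leq \frac{2}{3}$, so since $x$ and $y$ are sampled independently and uniformly over $H$, the probability that both land in the center is at most $\left(\frac{2}{3}\right)^2 = \frac{4}{9} < \frac{1}{2}$. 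Hence with probability strictly greater than $\frac{1}{2}$ at least one test fails and the algorithm returns $0$, as required by the definition of a one-sided Monte Carlo algorithm.

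The only real subtlety — and the reason two samples rather than one are needed — is arithmetic: a single sample would only guarantee a failure probability of $\frac{1}{3}$, which does not exceed the required threshold of $\frac{1}{2}$, whereas squaring the bound pushes the success-despite-non-Booleanness probability below $\frac{1}{2}$. I therefore expect no genuine obstacle beyond bookkeeping; the main point to get right is that \textbf{BB1} guarantees uniform distribution of $\pi(x)$ over $H$ (not over $X$), so that the gap bound from Theorem~\ref{thm:excluded-middle-gap}, which is stated in terms of uniform sampling from $H$, applies directly.
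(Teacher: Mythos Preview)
Your proposal is correct and matches the paper's proof essentially step for step: sample two cryptoelements, test excluded middle on each via the black-box oracles, output $1$ iff both pass, and use Theorem~\ref{thm:excluded-middle-gap} to bound the false-positive probability by $(2/3)^2 = 4/9 < 1/2$. Your added remarks on why two samples are needed and on the uniformity of $\pi(x)$ over $H$ are accurate and only make the argument more explicit.
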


\begin{body}
The idea behind Theorem~\ref{thm:bbox-boolean-recognition} clearly generalizes to any first-order property of structures that is definable by an equation with finite satisfiability gap, establishing a fruitful application of degree of satisfiability to black-box algebra.
\end{body}

\begin{problem}
Is there a one-sided black-box Monte Carlo algorithm that tests whether a black-box Heyting algebra is a Boolean algebra in time polynomial in $\ell(X)$, but does not invoke the oracle $\hat{\vee}$? What about an algorithm that does not use $\hat{\bot}$? Relatedly, can we find an equation $\varphi$ in the language $\langle \wedge, \rightarrow, \bot, \top \rangle$ (resp. the language $\langle \wedge, \vee, \rightarrow, \top \rangle$) so that $\varphi$ is a classical principle with finite satisfiability gap?
\end{problem}

\section{Discussion}\label{sec:discussion}
\subsection{Anti-exceptionalism}

\begin{body}
Anti-exceptionalism -- the belief that logic should be accepted, rejected and revised according to the same standards as other (e.g. scientific) theories -- plays a distinguished role in the leading contemporary realist metaphysical accounts of reasoning (see e.g. Part 3 of G.~Russell's \textit{Justification}~\cite{russellJustify}). In his methodological treatise, T.~Williamson~\cite{williamsonModal} asserts that purported logical laws should be judged, at least in part, based on \textit{``the fit between their consequences and what is independently known''}. As such, in Williamson's account, it is possible to have disconfirming physical evidence against purported logical laws. Namely, if a well-confirmed theory yields disconfirmed consequences according to the consequence relation of a logic, that counts as evidence against the logic. For the reader interested in anti-exceptionalism, we recommend O.~T.~Hjortland's summary~\cite{hjortlandEvidence} of Williamson's account, which gives several worked-out examples.
\end{body}

\begin{body}\label{body:williamson-objections}
Although Williamson argued that his variant of anti-exceptionalism provides evidential criteria that favors classical logic over other, competing logical theories in realist metaphysical accounts of reasoning, there are several objections to this view. Beyond the ones summarized by Hjortland~\cite{hjortlandEvidence}, we further point out the non-existence of a neutral interpretation relating the structure of logical formulae to ``disconfirming physical evidence'': what one counts as disconfirming physical evidence for a compound sentence (think e.g. $((P \rightarrow Q) \rightarrow P) \rightarrow P$) may depend on the flavor of logic one chooses to use.
\end{body}

\begin{body}
Even if the objections discussed in Paragraph~\ref{body:williamson-objections} were resolved, it is still questionable whether there could be any physical situation that would allow one to distinguish between classical and intuitionistic logic using Williamson's methodology. While investigating the full implications of our technical results to Williamson's anti-exceptionalist argument will require a longer philosophical treatise, we note that, were one able to devise a large number of such situations, one could apply Heyting algebras as a semantic model, and use our results (in particular Theorem~\ref{thm:excluded-middle-gap} and its black-box formulation, Theorem~\ref{thm:bbox-boolean-recognition}) to argue that whenever the classical laws of propositional logic fail in a given setting, the evidence disconfirming them is abundant. Conversely, Theorem~\ref{thm:strong-relativism} below shows that in all contexts where the law of excluded middle holds to \textit{any} appreciable degree, some non-tautologies of intuitionistic logic hold universally.
\end{body}

\begin{theorem}\label{thm:strong-relativism}
For any natural $n \geq 2$, let $f_n$ be one of the formulae $\{i_{n+1}, d_n\}$ in the Rieger-Nishimura lattice. There is a strictly monotone function $g$ such that, given any finite Heyting algebra $H$, if $\ds_H(d_2) > \frac{2}{g(n)}$, then $\ds_H(f_n) = 1$.
\begin{proof}
Let $g(n)$ denote the smallest integer $m$ such that some Heyting algebra $M$ with $|M| = m$ fails to satisfy $f_n$. Assume that $\ds_H(d_2) > \frac{2}{g(n)}$.  Using the inductive argument in the proof of Theorem~\ref{thm:excluded-middle-gap}, we can write $H$ as the product $H \cong H_1 \times H_2 \times \cdots \times H_k$ where all the factors $H_i$ have trivial center, and by Proposition~\ref{prop:ds-of-product-is-product-of-ds}, $\ds_{H_i}(d_2) > \frac{2}{g(n)}$ holds for each factor $H_i$. This means that $|H_i| < g(n)$, so $\ds_{H_i}(f_n) = 1$ holds for each $i$ by the definition of $g(n)$. It follows that $\ds_{H}(f_n) = 1$.
\end{proof}
\end{theorem}

\subsection{Open problems}

\begin{body}
On one hand, the classification of Theorem~\ref{thm:1-var-classification} raises many follow-up questions about the case of multiple variables. For example, are there infinitely many different equations in $n \geq 2$ free variables with finite satisfiability gap? Similarly, can the classification be extended from equations to equations in two variables where one of the variables is universally quantified? In group theory, such formulae frequently have well-behaved gaps as a consequence of Lagrange's theorem. That said, we expect that the complicated structure of the free Heyting algebras on multiple generators to make all investigation in this vein quite difficult in the Heyting algebra setting.
\end{body}

\begin{body}
On the other hand, while Theorem~\ref{thm:1-var-classification} provides a complete classification, it does not say anything about relative degrees of satisfiability: for example, it could be useful to classify the \textit{pairs} of one-variable equations, $(\varphi, \psi)$ which cannot \textit{both} hold with arbitrarily high (but $\neq 1$) probability \textit{in the same algebra}. For example, the equations $\neg x \vee \neg\neg x = \top$ and $\neg\neg x = x$ constitute one such pair. We expect that a thorough understanding of all such pairs would lead to much more elegant proofs of Theorem~\ref{thm:1-var-classification}. Similarly, one could attempt classification in more restricted (say Gödel algebras) and less restricted (modular lattices) settings: the former would have applications in intermediate logics, while the latter would be useful for the development of black box lattice theory discussed in Section \ref{sec:black-box}.
\end{body}

\begin{body}
As a long-term goal, one would wish to see a classification of those intermediate logics which can be defined by an equation with finite satisfiability gap. As a stepping stone, one would want to obtain well-behaved families of formulae with finite gap: note that e.g. the Yankov formulae do not constitute such a family.
\end{body}

\paragraph{Acknowledgements.}

Both authors contributed equally to this work. B. M. Bumpus was supported both by an EPSRC studentship, and by the European Research Council (ERC) under the European Union's Horizon 2020 research and innovation programme (grant agreement No 803421, ReduceSearch). Z. A. Kocsis acknowledges the use of CSIRO's national facilities in undertaking this research.  We are grateful to the anonymous referees for their thorough review and valuable suggestions, including an idea that streamlined the proof of Theorem~\ref{thm:excluded-middle-gap}.

\bibliography{biblio}
\bibliographystyle{plainurl}

\end{document}